\documentclass[12pt,reqno]{amsart}
\usepackage[utf8]{inputenc}
\usepackage{ytableau}
\usepackage{youngtab}
\usepackage{comment}
\usepackage{mathtools}
\usepackage{tikz-cd}
\usepackage{color}
\usepackage{hyperref}
\usepackage{amsmath, latexsym}
\usepackage{amssymb}
\usepackage{amsfonts}
\usepackage{graphicx}
\usepackage[all]{xy}
\usepackage{verbatim}
\usepackage{mathdots}
\usepackage{cleveref}
\usepackage[left=2.8cm,right=3.0cm,top=3.0cm,bottom=3.2cm]{geometry}
\usepackage[colorinlistoftodos]{todonotes}% colorful comments
\hypersetup{colorlinks,linkcolor={blue},citecolor={blue},urlcolor={blue}}
%\usepackage[backend=bibtex,style=alphabetic,doi=false,isbn=false,url=false,giveninits=true,maxbibnames=50]{biblatex}
%\addbibresource{ref.bib}
%\setlength{\bibitemsep}{1em} 

\newtheorem{theorem}{Theorem}[section]
\newtheorem{lemma}[theorem]{Lemma}
\newtheorem{corollary}[theorem]{Corollary}
\newtheorem{proposition}[theorem]{Proposition}

\newtheorem*{theorem-nonumber}{Theorem}
\newtheorem*{definition-nonumber}{Definition}
\newtheorem*{proposition-nonumber}{Proposition}

\theoremstyle{definition}
\newtheorem{definition}[theorem]{Definition}
\newtheorem{example}[theorem]{Example}
\newtheorem{remark}[theorem]{Remark}
\newtheorem{examplemat}{Example}

%Colorful comments commands

%New Commands for the Notation
\newcommand{\K}{\mathcal{K}}

\newcommand{\X}{\textsc{X}}

\newcommand{\Jm}{\textsc{J}}
\newcommand{\grass}{\textsc{Sp}\textsc{G}\mathsf{r}}
\newcommand{\Gr}{\textsc{G}\mathsf{r}}
\newcommand{\Sp}{\textsc{Sp}}

\newcommand{\U}{\textsc{U}}
\newcommand{\I}{\textsc{I}}

\newcommand{\T}{\textsc{T}}

\newcommand{\complete}{\textsc{SpF}}
\newcommand{\ov}{\overline}

\newcommand{\ldb}{\{\!\!\{}
\newcommand{\rdb}{\}\!\!\}}

\newcommand{\BB}{\mathcal{B}}
\newcommand{\PP}{\mathbb{P}}
\newcommand{\TP}{\mathbb{T}\mathbb{P}}
\newcommand{\RR}{\mathbb{R}}
\newcommand{\TT}{\mathbb{T}}
\newcommand{\CC}{\mathbb{C}}

\newcommand{\Dr}{\textsc{D}\mathsf{r}}
\newcommand{\trop}{\operatorname{trop}}
\newcommand{\val}{\operatorname{val}}
\newcommand{\cone}{\operatorname{cone}}
\newcommand{\Bergman}{\operatorname{Be}}
\newcommand{\KK}{\mathbb{K}}

\newcommand{\sign}{\operatorname{sign}}

\newcommand{\tdet}{\operatorname{tdet}}
\newcommand{\iden}{\operatorname{Id}}
\newcommand{\conv}{\operatorname{conv}}

\begin{document}
\nocite{*}
\thispagestyle{empty}
\title[The tropical symplectic Grassmannian]{The tropical symplectic Grassmannian}
\author[G. Balla and J. A. Olarte]{George Balla and Jorge Alberto Olarte}
\address{Algebra and Representation Theory, RWTH Aachen University, Pontdriesch 10-16, 52062 Aachen, Germany}
\email{balla@art.rwth-aachen.de}
\address{Institut f\"ur Mathematik, Technische Universit\"at Berlin, Stra{\ss}e des 17. Juni 135, 10623 Berlin, Germany}
\email{olarte@math.tu-berlin.de}
\maketitle

\begin{abstract}
    We launch the study of the tropicalization of the symplectic Grassmannian, that is, the space of all linear subspaces isotropic with respect to a fixed symplectic form.
    We formulate tropical analogues of several equivalent characterizations of the symplectic Grassmannian and determine all implications between them. In the process, we show that the Pl\"ucker and symplectic relations form a tropical basis if and only if the rank is at most 2. 
    We provide plenty of examples that show that several features of the symplectic Grassmannian do not hold after tropicalizing. We show exactly when do conormal fans of matroids satisfy these characterizations, as well as doing the same for a valuated generalization. Finally, we propose several directions to extend the study of the tropical symplectic Grassmannian. 
\end{abstract}

\section{Introduction}
Given a field $\KK$, the Grassmannian, denoted by $\Gr_{\KK}(k,n)$, is the space of all linear subspaces of $\KK^n$ of dimension $k$. Let $\omega$ denote the standard symplectic form on $\KK^{2n}$. The symplectic Grassmannian, which will be denoted by $\grass_{\KK}(k,2n)$, is the subset of the Grassmannian consisting of all isotropic subspaces of $\KK^{2n}$, that is, spaces whose elements are orthogonal to each other with respect to $\omega$.
We consider the Grassmannian under its Pl\"ucker embedding and the symplectic Grassmannian as a subset under the same embedding. The defining ideal of the latter variety with respect to this embedding is generated by Pl\"ucker and certain linear relations that we term symplectic relations \cite{Dec79}. We will therefore refer to this ideal as the Pl\"ucker-Symplectic ideal.
For a given valuation $\val: \KK \to \TT := \RR\cup\{\infty\}$, the tropicalization of the Grassmannian is the space of tropicalizations of linear subspaces \cite{SS04}. We define the tropical symplectic Grassmannian as the tropicalization of the symplectic Grassmannian with respect to the Pl\"ucker-Symplectic ideal. Therefore, it is a subset of the tropical Grassmannian consisting of tropicalizations of isotropic linear subspaces. We will denote it by $\T\grass_p(k,2n)$, where $p$ is the characteristic of the field $\KK$.\\

It is known that the geometry of the tropical Grassmannian is governed by the combinatorics of matroids \cite{Spe08}. Coxeter matroids are generalizations of matroids to different root systems, where the type {\tt A} Coxeter matroids are the usual matroids \cite{BGW03}. In \cite{Fel12}, Rinc\'on studied the tropicalization of the type {\tt D} Grassmannian, known as the Spinor variety, using the type {\tt D} Coxeter matroids, also known as $\Delta$-matroids, to obtain similar results as those in \cite{Spe08} for type {\tt A}. The symplectic Grassmannian can be interpreted as the type {\tt C} Grassmannian. However, very little is known about the type {\tt C} Coxeter matroids, known as symplectic matroids (see  \cite{BGW98} and \cite[Chapter 3]{BGW03} for what is known about them). In particular, there is no good characterization of symplectic matroids in terms of axioms for their bases. This makes it more challenging to build a theory of their valuated counterparts as tropical Pl\"ucker vectors similar to the ones for types {\tt A} and {\tt D}. So, we take a different approach, relying on the already rich theory of type {\tt A} valuated matroids.\\

There are several equivalent ways of saying that a linear space belongs to the symplectic Grassmannian, among them:
\begin{enumerate}
    \item Its Pl\"ucker vector satisfies all polynomials in the Pl\"ucker-Symplectic ideal.
    \item Its Pl\"ucker vector satisfies the symplectic relations.
    \item It is isotropic.
    \item It has a basis of pairwise orthogonal vectors (with respect to $\omega$).
    \item It is the row span of a matrix $(A|B)$ such that $A \cdot B^T$ is symmetric, where $A,B\in \KK^{k\times n}$.
\end{enumerate}

There is a natural way of obtaining a tropical analogue for each of the statements above, by imposing a condition on a given tropical linear space. The first is the most obvious one, as it just asks to be in the tropicalization of the symplectic Grassmannian. Just like the Dressian is the tropical prevariety consisting of vectors satisfying the tropical Pl\"ucker relations, we define the symplectic Dressian, which will be denoted by $\Sp\Dr(k,2n)$, to be the prevariety consisting of vectors satisfying the tropical symplectic relations as well as the tropical Pl\"ucker relations. 

A tropical notion of isotropic linear spaces was already studied in \cite{Fel12}. Rinc\'on's motivation was in type {\tt D}, that is, in the context of isotropic spaces with respect to a symmetric bilinear relation. However, the sign that distinguishes that context to ours vanishes after tropicalizing, so his definition is the relevant one for us too. For the last two characterizations, we use the tropical Stiefel map from \cite{FR15} as an analogue of being the span of a collection of vectors. However, we recognize the limitations of this last analogy, since it forces us to restrict ourselves to tropical linear spaces of this kind. 
Thus we obtain the following conditions on a given tropical linear space that can be considered tropical analogues of the characterizations of the symplectic Grassmannian above.
\begin{enumerate}
    \item It is in the tropical symplectic Grassmannian.
    \item It is in the symplectic Dressian.
    \item It is tropically isotropic.
    \item It is the image under the tropical Stiefel map of a matrix with (tropically) orthogonal rows.
    \item It is the image under the tropical Stiefel map of a matrix $(A|B)$ such that $A\odot B^T$ is symmetric, where $A,B\in \TT^{k\times n}$.
\end{enumerate}

Our main result (\Cref{thm:main}), determines for each $k$ (dimension of the tropical linear space) and $n$ (dimension of the ambient space), which of the above statements implies another. It is fully described in \Cref{fig:zoomap} below. 
\begin{figure}[ht]
	\centering
		\includegraphics[width =0.98\textwidth]{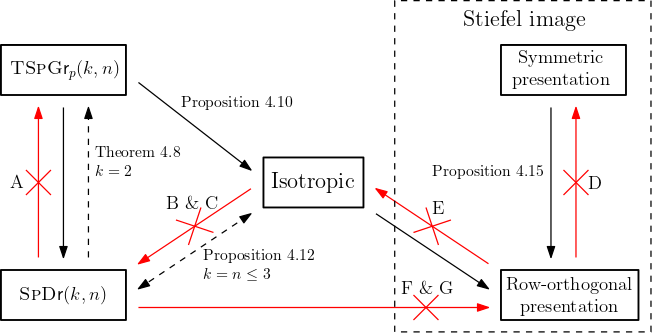}
	\caption{Map of the Matroid Zoo.}
		\label{fig:zoomap}
\end{figure}
The five boxes represent the five tropical analogues. Black full arrows show implications, which are containment of sets. For example, the left most downward arrow means that $\T\grass_p(k,2n)\subseteq \Sp\Dr(k,2n)$. Dashed arrows represent implications for specific cases. They come with a label indicating the conditions on $k$ and $n$ for them to hold. Black arrows come labeled with the corresponding statement in \Cref{sec:tropicalanalogies} when not trivial. 
Red arrows represent counterexamples that show that such implications fail to hold. They come with the letters that label the corresponding counterexample in \Cref{sec:matroidzoo}.
The dashed box titled `Stiefel image' is there to clarify that the arrows which interact with the two right boxes are taken under the assumption that we are restricted to the Stiefel image. It is not there to say that only these valuated matroids are in the Stiefel image; for example, we have that the matroid of \Cref{ex:4points} is also in the Stiefel image.\\

A particularly interesting case covered in our main theorem is the rank two case. We see that in this case, the tropical symplectic Grassmannian $\T\grass_p(2,2n)$ and the symplectic Dressian $\Sp\Dr(2,2n)$ coincide (\Cref{thm:rank2}). This result also implies that the Pl\"ucker and symplectic relations form a tropical basis for the Pl\"ucker-Symplectic ideal in this case. This is analogous to the type {\tt A} result by Maclagan and Sturmfels \cite{MS15}. Furthermore, Speyer and Sturmfels \cite{SS04} showed that the tropical Grassmannian in rank two coincides with the space of phylogenetic trees. We show that this coincidence holds in the symplectic case as well, after quotienting $\T\grass_p(2,2n)$ by the tropical hyperplane corresponding to the single tropical symplectic relation (Proposition \ref{pro:coincidence}). We then go on to obtain enumerative information on this fan, namely, the number of rays and facets (Corollary \ref{cor:rays}) and the Betti numbers for the corresponding simplicial complex (\Cref{cor:betti}).\\

Another case of particular interest is the Lagrangian case, that is, when $k=n$. In \cite{ADH20}, the authors proved that the $h$-vector of the independence complex and the broken circuit complex of a matroid is log-concave, solving  conjectures that were standing since the 80's. One of their main tools is what they call the conormal fan of a matroid $M$, which has the same support as the product of the Bergman fan of $M$ and its dual. The conormal fan has a natural generalization to any valuated matroid, forming a class of linear spaces which make sense to study under our context. We show that indeed, they are tropically isotropic, they satisfy the tropical symplectic relations, and provided the involved valuated matroids are realizable, they are in the tropical symplectic Grassmannian (\Cref{prop:conormal}).\\

The paper is organized as follows: \Cref{sec:symplecticgrassmannian} provides background on the classical Grassmannian as well as the symplectic Grassmannian while \Cref{sec:tropicalgrassmannian} provides background on the tropical Grassmannian and its connection to valuated matroids. In \Cref{sec:tropicalanalogies}, we formulate each of the tropical analogues of characterizations of the symplectic Grassmannian and show the implications between them. \Cref{sec:ranktwo} deals with the rank two case and in particular, we show that for this case, the symplectic and Pl\"ucker relations form a tropical basis for the symplectic Grassmannian. \Cref{sec:matroidzoo} contains the other side of the main result, by exhibiting all counterexamples needed to finish the proof, plus some other interesting examples that exhibit some pathologies that can occur. Before that, \Cref{sec:directsums} explains why the list given in \Cref{sec:matroidzoo} is complete to prove \Cref{thm:main}. 
In \Cref{sec:conormal}, we answer the question: where do conormal bundles fit in \Cref{fig:zoomap}? 
Finally, \Cref{sec:future} suggests several directions for future work, including the connection to symplectic matroids and applications to toric degenerations of flag varieties. 

\subsection*{Acknowledgments}
The authors are grateful to Ghislain Fourier for suggesting this project and for insightful discussions. The second author would also like to thank Felipe Rinc\'on for useful discussions. Great thanks to Xin Fang, Christian Steinert and an anonymous referee for reading an earlier version, and for pointing out misprints. The first author is funded by the Deutscher Akademischer Austauschdienst (DAAD, German Academic Exchange Service) scholarship program: Research Grants - Doctoral Programs in Germany. The second author is funded by the Deutsche Forschungsgemeinschaft (DFG, German Research Foundation) through Project-ID 286237555 – TRR 195 and under Germany's Excellence Strategy; The Berlin Mathematics Research Center MATH+ (EXC-2046/1, project ID 390685689, 
sub-project AA/EFx-y).

\numberwithin{equation}{section}
\section{The Symplectic Grassmannian}
\label{sec:symplecticgrassmannian}
We begin by fixing some notation that we will be using. Let $[n]:=\{1,\ldots,n\}$ and $[2n]:=\{1,\ldots,n,\overline{1},\ldots, \overline{n}\}$ ordered as  $1<\cdots<n<\overline{1}<\cdots< \overline{n}$. Adding a bar is a dual operation, so that $\bar{\bar{i}} = i$.
For a set $S$, we write $Si := S\cup\{i\}$ and $\overline{S} := \{\bar{i} \mid i\in S \}$.
Let $\KK$ be an algebraically closed field of characteristic $p$. 
For a matrix $M\in \KK^{k\times n}$ we write $M^T$ for the transpose and for a sub-sequence $\Jm\subseteq [n]$ let $M_{\Jm}$ denote the sub-matrix of $M$ with columns indexed by $\Jm$. 

The \emph{Grassmannian} $\Gr_\KK(k,n)$ is the space of all $k$-dimensional linear subspaces of $\KK^{n}$. 
Whenever the field $\KK$ is not relevant, we omit it from the subscript and write just $\Gr(k,n)$.
A subspace $L\subseteq\KK^{n}$ can be identified by its \emph{Pl\"ucker coordinates} $\X_{\Jm}$, which are determinants of the sub-matrices $M_{\Jm}$ for every $\Jm\in \binom{[n]}{k}$, where $M$ is a $(k\times n)$-matrix whose rows span $L$. A \textit{Pl\"ucker vector} $[\X]\in\PP^{\binom{n}{k}-1}$ is a vector whose coordinates are the $\binom{n}{k}$ Pl\"ucker coordinates. The Grassmannian $\Gr(k,n)$ can be identified with the set of all Pl\"ucker vectors in $\PP^{\binom{n}{k}-1}$ via the \emph{Pl\"ucker embedding}. It is a projective variety  of dimension $k(n-k)$ that satisfies the \emph{Pl\"ucker relations}:
\begin{equation}\label{eqn:plueckerrelations}
    \forall \,\,S\in \binom{[n]}{k-1}, \quad \forall \,\,T\in \binom{[n]}{k+1}, \quad\sum_{i\in T\backslash S} \sign(i,T\backslash S)\X_{T\setminus i}\cdot \X_{Si} = 0,
\end{equation}
where the sign $\sign(i,T\backslash S)\in\{-1,1\}$ alternates with respect to the ordering of $T\backslash S$. We write $\mathcal{P}_{k,n}(\KK)$ for the ideal generated by the above relations, called the \emph{Pl\"ucker ideal}.\\

The subspace $L\subseteq\KK^{n}$ can be recovered from the Pl\"ucker vector $\X\in \PP^{\binom{n}{k} -1}$ as follows:
\begin{equation}
\label{eq:linspace}
L = \left\{x\in \KK^{n} \mid \forall\,\, T\in \binom{[n]}{k+1},\quad \sum_{i\in T} \sign(i,T)\X_{T\setminus i}\cdot x_i = 0\right\}.
\end{equation}

A symplectic vector space $W$ is a vector space with a symplectic form, which is a bilinear form $\omega: W\times W \to \KK$ satisfying:
\begin{itemize}
    \item $\forall\,\, v\in W \enspace \omega(v,v) = 0 $ (alternating).
    \item if $\omega(u,v) = 0$ for every $u\in W$, then $v=0$ (non-degenerate).
\end{itemize}
The first condition implies that $\omega(u,v) = - \omega(v,u) \,\, \forall\,\, u,v\in W$, however the converse only holds if $p\ne 2$.

It is well known that any symplectic vector space has even dimension and that there is a basis $\{e_1,\dots, e_n,e_{\bar{1}},\dots, e_{\bar{n}}\}$ such that $\omega$ is given by $\omega(e_i, e_{\bar{i}}) = 1$ (for $i<\bar{i}$) and $\omega(e_i,e_j)= 0$ for $j\ne \bar{i}$. In other words, $\omega$ can be written as
\begin{equation}
\label{eq:sympform}
\begin{pmatrix}
0 & \iden_n \\
-\iden_n & 0
\end{pmatrix},    
\end{equation}
with respect to this basis, where $\iden_n$ is the $n\times n$ matrix with $1$'s on the diagonal and zeros elsewhere.

We say that two vectors $u$ and $v$ are \emph{orthogonal} if $\omega(u,v) = 0$. A $k$-dimensional linear subspace $L\subseteq \KK^{2n}$ is called \emph{isotropic} if every two vectors in $L$ are orthogonal. 
By bilinearity, it is enough that $L$ has a basis consisting of pairwise orthogonal vectors. If we write such a basis as rows of a matrix $(A| B)$ with $A$ and $B$ two $k\times n$ matrices,
it is straightforward to see that the basis is orthogonal if and only if $A\cdot B^T$ is symmetric. If $L$ is isotropic then it follows that $k\le n$. When $\dim(L) = n$, we also call $L$ \emph{Lagrangian}.\\

The \emph{symplectic Grassmannian} $\grass_{\KK}(k,2n)$ is the sub-variety of $\Gr_{\KK}(k,2n)$ consisting of all isotropic linear subspaces of $\KK^{2n}$. Again, we omit the subscript $\KK$ when the field does not play a role and write $\grass(k,2n)$. It is an irreducible projective variety of dimension $\frac{k(4n-3k+1)}{2}$ (see for example, \cite{MWZ20}).
It follows from the work of De Concini \cite{Dec79} that the defining ideal of $\grass(k,2n)$, which we denote by $S_{k,2n}(\KK)$, is generated by the Pl\"ucker relations together with the \emph{symplectic relations}:
\begin{equation}\label{eqn:symplecticrelations}
    \forall\,\, S\in \binom{[2n]}{k-2},  \quad \sum_i \X_{Si\overline{i}} = 0,
\end{equation}
with respect to the Pl\"ucker embedding, where the sum goes over all $i\in[n]$ such that $\{i,\overline{i}\}\cap S =\emptyset$. 
We call $S_{k,2n}(\KK)$ the \emph{Pl\"ucker-Symplectic ideal}. 
We include the following short proof that the symplectic relations really define the symplectic Grassmannian, for completeness. It was originally written by Carrillo-Pacheco and Zaldivar \cite{CPZ11} for the Lagrangian case $k=n$, but it works for any $k$. 

\begin{proposition}
The defining ideal of $\grass(k,2n)$ with respect to the Pl\"ucker embedding is the ideal $S_{k,2n}(\KK)$.
\end{proposition}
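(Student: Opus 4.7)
The plan is to establish the equivalence at the level of points and then leverage primeness of $S_{k,2n}(\KK)$ (De~Concini's result, \cite{Dec79}) to upgrade to the ideal-theoretic statement. Concretely, I want to show that a Pl\"ucker vector $[\X]\in\Gr(k,2n)$ satisfies the symplectic relations \eqref{eqn:symplecticrelations} if and only if the subspace it defines is isotropic.

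My first step is to isolate the single linear-algebraic identity that drives both implications. Let $M\in\KK^{k\times 2n}$ represent a subspace $L$ and let $\Omega$ be the matrix of $\omega$ as in \eqref{eq:sympform}. Form the $k\times k$ antisymmetric matrix $N:=M\,\Omega\,M^{T}$, so that $L$ is isotropic if and only if $N=0$, and a direct computation yields
\[
N_{a,b}=\sum_{i=1}^{n}\det M_{\{a,b\},\{i,\bar{i}\}}.
\]
Next, I would Laplace-expand $\det(M_{Si\bar{i}})$ along the two columns indexed by $i$ and $\bar{i}$, obtaining
\[
\X_{Si\bar{i}}=\sum_{1\le a<b\le k}\epsilon_{a,b}\,\det M_{\{a,b\},\{i,\bar{i}\}}\cdot \det M_{[k]\setminus\{a,b\},S},
\]
where $\epsilon_{a,b}\in\{\pm 1\}$ depends only on $a,b$. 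Summing over $i\in[n]$, the terms with $\{i,\bar{i}\}\cap S\ne\emptyset$ vanish because of repeated columns, so the sum agrees with the one in \eqref{eqn:symplecticrelations} and produces the key identity
\[
\sum_{i}\X_{Si\bar{i}}=\sum_{1\le a<b\le k}\epsilon_{a,b}\,N_{a,b}\,\det M_{[k]\setminus\{a,b\},S}.
\]

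From here the forward implication is immediate: if $L$ is isotropic, $N=0$ and every symplectic relation vanishes. For the converse, I would put $M$ in reduced row echelon form so that $M_{\cdot,I}=\iden_k$ for some $I=\{c_1<\cdots<c_k\}\subset[2n]$ and, for each pair $1\le a<b\le k$, specialize the identity above to $S=I\setminus\{c_a,c_b\}$. Because $M_{\cdot,I}=\iden_k$, the minor $\det M_{[k]\setminus\{a',b'\},S}$ equals $\pm 1$ when $\{a',b'\}=\{a,b\}$ and $0$ otherwise, so the symplectic relation for this $S$ collapses to $\pm N_{a,b}=0$. Varying the pair yields $N=0$, and hence $L$ is isotropic.

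The main obstacle I anticipate is the sign bookkeeping in the Laplace expansion together with the verification that the extended sum over all $i\in[n]$ really coincides with the restricted sum in \eqref{eqn:symplecticrelations}; once these formal checks are in place, the proof reduces to the above identity and a choice of echelon form.
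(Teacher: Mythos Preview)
Your argument is correct and will go through once the signs are pinned down; with the convention that $\X_{Si\bar{i}}$ denotes the minor with columns listed as $s_1,\dots,s_{k-2},i,\bar{i}$, Laplace expansion along the last two columns gives $\epsilon_{a,b}=(-1)^{a+b+1}$, independent of $i$, and the rest of your outline (including the echelon specialisation and the appeal to De~Concini for primeness) is sound.

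The paper follows the same overall strategy---establish the set-theoretic equality inside $\Gr(k,2n)$ and invoke \cite{Dec79} for the ideal statement---but organises the computation via the contraction map
\[
f:\bigwedge^{k}\KK^{2n}\longrightarrow\bigwedge^{k-2}\KK^{2n},\qquad v_1\wedge\cdots\wedge v_k\longmapsto\sum_{r<s}(-1)^{r+s}\omega(v_r,v_s)\,v_1\wedge\cdots\hat{v}_r\cdots\hat{v}_s\cdots\wedge v_k,
\]
showing first that $\grass(k,2n)=\Gr(k,2n)\cap\PP(\ker f)$ and then that $\ker f$ is cut out exactly by the linear forms \eqref{eqn:symplecticrelations}. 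What this buys is that the sign bookkeeping you flag as the main obstacle is absorbed into the alternation of the wedge product, and the converse direction becomes a one-liner: for linearly independent $v_1,\dots,v_k$ the wedges $v_1\wedge\cdots\hat{v}_r\cdots\hat{v}_s\cdots\wedge v_k$ are linearly independent, so $f(v)=0$ forces every $\omega(v_r,v_s)$ to vanish. Your matrix/Laplace route is more elementary and makes the connection to $N=M\Omega M^{T}$ explicit, at the cost of the sign check and the separate reduced-row-echelon argument.
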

\begin{proof}
For $1\leq k\leq n$, let $\bigwedge^k\KK^{2n}$ be the $k$-th exterior power over $\KK^{2n}$. Define a linear map:
\begin{align*}
    f: \bigwedge^k \KK^{2n} &\longrightarrow \bigwedge^{k-2} \KK^{2n},\\
    v_1\wedge\cdots \wedge v_k &\longmapsto \sum_{1\leq r<s\leq k}(-1)^{r+s}\omega(v_r,v_s)v_1\wedge \cdots\wedge \hat{v}_r\wedge\cdots\wedge \hat{v}_s\wedge\cdots\wedge v_k,
\end{align*}
where $\hat{v}$ means that the corresponding term is omitted. 
Notice that the sign $(-1)^{r+s}$ is necessary for $f$ to be well defined with respect to commuting the terms in the wedge product (this sign is missing in \cite{CPZ11}).
Let $\K$ denote the kernel of $f$ and denote by $\PP(\K)$ the projectivization of $\K$. Then $\PP(\K)$ is a closed irreducible subset of $\PP\left( \bigwedge^k \KK^{2n}  \right)$ under the Pl\"ucker embedding. 

Claim 1: $\grass(k,2n)=\Gr(k,2n)\cap \PP(\K)$. It is clear that $\grass(k,2n)\subseteq\Gr(k,2n)\cap \PP(\K)$. For the opposite inclusion, if $v\in \Gr(k,2n)\cap \PP(\K)$, then $v$ is the equivalence class of $v_1\wedge\cdots\wedge v_k$ with $v_1,\ldots, v_k$ linearly independent and $f(v)=0$, that is:
\[f(v)=\sum_{1\leq r<s\leq k}(-1)^{r+s}\omega(v_r,v_s)v_1\wedge \cdots\wedge \hat{v}_r\wedge\cdots\wedge \hat{v}_s\wedge\cdots\wedge v_k=0,\]
and thus $\omega(v_r,v_s)=0$ for all $1\leq r<s\leq k$, and hence the claim follows. 

Claim 2: $v\in \K$ if and only if the linear relations \eqref{eqn:symplecticrelations} vanish. 
For this, we use the linearity of $f$. Write $v$ with its Pl\"ucker coordinates, that is, 
\[v=\sum\limits_{I\in \binom{[2n]}{k}} \X_I e_{i_1}\wedge\dots\wedge e_{i_k}\] where $I=\{i_1\dots i_k\}$. So 
\[
f(v) = \sum\limits_{I\in \binom{[2n]}{k}} \X_I f(e_{i_1}\wedge\dots\wedge e_{i_k}).
\]
Notice that $f(e_{i_1}\wedge\dots\wedge e_{i_k})$ vanishes, unless $\{i,\bar{i}\}\subseteq I$ for some $i$. Let $S= \{s_1\dots s_{k-2}\}\in  \binom{[2n]}{k-2}$. The coefficient of $e_{s_1}\wedge \dots \wedge e_{s_k}$ in $f(v)$ is
\[
\sum_{\{i,\bar{i}\}\cap S=\emptyset} \X_{Si\bar{i}}(-1)^{2k-1}.
\]
So $f(v)=0$ if and only if the coefficient above is $0$ for every $S$, which are precisely the linear relations (\ref{eqn:symplecticrelations}).
\end{proof}

To summarize, we have established five equivalent conditions for a linear space $L$ to be in the symplectic Grassmannian $\grass(k,2n)$:
\begin{enumerate}
    \item All polynomials in the Pl\"ucker-Symplectic ideal $S_{k,2n}(\KK)$ vanish on the Pl\"ucker coordinates of $L$. 
    \item The Pl\"ucker coordinates of $L$ satisfy the symplectic relations (\Cref{eqn:symplecticrelations}).
	\item For every pair $u,v \in L$, we have $\omega(u,v) = 0$.
	\item There is a basis of $L$,  $u_1,\dots,u_k$, such that $\forall \,\,i,j, \enspace \omega(u_i,u_j) = 0$.
	\item If $L$ is the row-span of a matrix $(A | B)$, then $A\cdot B^T$ is symmetric, where $A,B\in \KK^{k\times n}$.
\end{enumerate}

\section{The Tropical Grassmannian}\label{sec:tropicalgrassmannian}
In this section, we recall notions related to the tropical Grassmannian, which was first introduced and studied by Speyer and Sturmfels \cite{SS04}. We refer the reader to Maclagan and Sturmfels \cite{MS15} for a comprehensive introduction to tropical geometry. 

To get started, we recall a couple of key tropical notions and fix some conventions. Let $\TT$ be the semi-field of tropical numbers $\TT = \RR\cup\{\infty\}$ with operations $\oplus := \min$ and $\odot := +$.
Let $\val:\KK\to \TT$ be a non-Archimidean valuation. 
Given an ideal $\I\subseteq\KK[\X_1,\ldots,\X_n]$, let $V(\I)$ denote the algebraic variety corresponding to the ideal $\I$. The \textit{tropical variety} $\trop(V(\I))$ is the intersection of the tropical hypersurfaces $\trop(V(f))$ for \textbf{all} $f\in \I$. 

On the other hand, a \textit{tropical prevariety} is the intersection of finitely many tropical hypersurfaces, which is not necessarily a tropical variety. 
A finite set of polynomials is said to be a \textit{tropical basis} of the ideal it generates if the tropical prevariety it defines is equal to the tropical variety of that ideal.

\subsection{The Tropical Grassmannian and the Dressian}
First, we recall the \emph{tropical projective space}
\[
\TT\PP^{\binom{n}{k}-1} := \left(\TT^{\binom{n}{k}} \setminus (\infty,\dots,\infty)\right)/ \RR(1,\dots,1).
\]
The \emph{tropical Grassmannian}, which we denote by $\T\Gr_p(k,n)$, is the tropicalization of $\Gr(k,n)$ i.e., $\T\Gr_p(k,n) = \trop(\Gr(k,n))$. 
We want to emphasize that we use the notation $\T\Gr_p(k,n)$ because the tropicalization of the Pl\"ucker ideal only depends on the characteristic $p$ of $\KK$. However, similarly as before, whenever we say something about the tropical Grassmannian that holds for every characteristic, we omit $p$ from the subscript and write $\T\Gr(k,n)$.

The \emph{Dressian} $\Dr(k,n)\supseteq \T\Gr(k,n)$ is the prevariety given by all the Pl\"ucker relations, i.e., all vectors in $\TP^{\binom{n}{k}-1}$ that satisfy the \emph{tropical Pl\"ucker relations}:
\[
\forall\,\, S\in \binom{[n]}{k-1}, \enspace \,\,\forall\,\, T\in \binom{[n]}{k+1}, \,\, \text{the minimum in} \bigoplus_{i\in T\setminus S} (x_{T\setminus i} \odot  x_{Si})
\]
is achieved at least twice (or is equal to $\infty$).

A vector $\mu\in \Dr(k,n)$ is called a \emph{valuated matroid} 
or a \emph{tropical Pl\"ucker vector}. A \emph{matroid} can be defined to be a tropical Pl\"ucker vector consisting of just $0$ and $\infty$ as entries. The set of subsets whose corresponding coordinate is $0$ is the usual set of \emph{bases} of the matroid (see Oxley \cite{OXL06} for more on the theory of ordinary matroids).
 
 The difference between the tropical Grassmannian and the Dressian is an extension of representable matroids vs non-representable matroids. When the bases of a matroid are the set of non-zero coordinates of a point in the Grassmannian $\Gr_\KK(k,n)$, we say that the matroid is \emph{representable} or \emph{realizable} over the field $\KK$. The existence of non-realizable matroids for fields of a given characteristic, such as the Fano matroid (for characteristic different than 2) and non-Fano matroid (for characteristic equal to 2), implies that the tropical Grassmannian indeed depends on the characteristic and that the Pl\"ucker relations \eqref{eqn:plueckerrelations} do not form a tropical basis for the Pl\"ucker ideal $\mathcal{P}_{k,n}$.
 In other words, the tropical Grassmannian $\T\Gr_p(k,n)$ and the Dressian $\Dr(k,n)$ do not coincide in general, except in some cases:

\begin{theorem}[Maclagan-Sturmfels, \cite{MS15}]
\label{thm:rank2A}
The Dressian $\Dr(2,n)$ and the tropical Grassmannian $\T\Gr(2,n)$ coincide.
\end{theorem}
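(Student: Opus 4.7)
The plan is to prove the non-trivial inclusion $\Dr(2,n) \subseteq \T\Gr(2,n)$, since $\T\Gr(2,n) \subseteq \Dr(2,n)$ is automatic: every point in the tropical Grassmannian satisfies the tropical Pl\"ucker relations defining the Dressian.

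First, I would identify $\Dr(2,n)$ combinatorially with the space of tree metrics on $[n]$. In rank~$2$ the only non-trivial tropical Pl\"ucker relations are the three-term ``four-point'' conditions: for every $\{i,j,k,l\} \in \binom{[n]}{4}$, the minimum of
\[
\mu_{ij}+\mu_{kl},\quad \mu_{ik}+\mu_{jl},\quad \mu_{il}+\mu_{jk}
\]
is attained at least twice. By Buneman's classical theorem, a symmetric function on $\binom{[n]}{2}$ satisfying this condition is precisely the weighted distance function of a phylogenetic tree with leaf set $[n]$ (allowing negative internal weights and appropriate degeneration when some coordinates equal $\infty$). Thus $\Dr(2,n)$ is identified with the space of phylogenetic trees on $n$ leaves.

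Next, I would realize every such tree metric $\mu$ as the valuation of the Pl\"ucker coordinates of a genuine rank-$2$ subspace over an algebraically closed valued field $\KK$ surjecting onto $\RR$. The construction is the standard Speyer--Sturmfels realization: given a phylogenetic tree $T$ with edge weights $w_e \in \RR$, one builds a $2\times n$ matrix $M$ over $\KK$ whose $i$-th column is determined by walking along $T$ from a fixed root toward the leaf $i$, encoding the edge weights by powers $t^{w_e}$ of a uniformizer together with generic scalars. For each pair $\{i,j\}$, the $2\times 2$ minor $\det M_{\{i,j\}}$ is a Laurent expression in $t$ whose lowest-order term corresponds to the unique path in $T$ joining $i$ and $j$, so that $\val(\det M_{\{i,j\}}) = \mu_{ij}$.

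The main obstacle is verifying the realization step, namely that the entries can always be chosen so that no unintended cancellation of leading terms occurs in any $2\times 2$ minor. This is where the rank-$2$ structure is essential: the three-term relations leave precisely the combinatorial freedom of a tree, so the initial terms of the minors are controlled by disjoint path data and cannot conspire to cancel, regardless of the characteristic of the residue field. The analogous construction breaks down for $k \geq 3$, which is consistent with the known failure of the Dressian and tropical Grassmannian to coincide there.
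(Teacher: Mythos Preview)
The paper does not supply a proof of this statement: Theorem~\ref{thm:rank2A} is quoted as a background result from Maclagan--Sturmfels and then invoked as a black box inside the proof of Theorem~\ref{thm:rank2}. So there is no in-paper argument to compare your proposal against.

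Your outline follows the standard route and its architecture is sound: the three-term tropical Pl\"ucker relations in rank~$2$ are precisely Buneman's four-point conditions, so $\Dr(2,n)$ modulo its lineality space is the space of phylogenetic trees on $[n]$, and one then lifts each tree metric to an honest $2$-plane over a valued field. This is essentially how both \cite{SS04} and \cite{MS15} proceed.

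The one place your sketch is too loose to stand as a proof is the realization step. The phrase ``walking along $T$ from a fixed root toward the leaf $i$, encoding the edge weights by powers $t^{w_e}$ together with generic scalars'' does not yet specify a column of a $2\times n$ matrix, and as written it is not clear why the minor $\det M_{\{i,j\}}$ should have lowest-order term governed by the $i$--$j$ path rather than, say, the difference of root-to-leaf distances. A clean way to make this precise: first use the lineality action (replacing $\mu_{ij}$ by $\mu_{ij}+c_i+c_j$, i.e.\ rescaling columns) to convert the tree metric into an ultrametric; an ultrametric is then realized as $\val(a_i-a_j)$ for suitable $a_1,\dots,a_n\in\KK$, built by an easy recursion on the nested partition it encodes; the matrix with rows $(1,\dots,1)$ and $(a_1,\dots,a_n)$ then has $\X_{ij}=a_j-a_i$ with $\val(\X_{ij})=\mu_{ij}$ and no cancellation to worry about. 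This works over any residue characteristic, confirming the point you make in your final paragraph.
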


In an analogous fashion as before, a \emph{tropical linear space} can be recovered from a tropical Pl\"ucker vector $\mu$:

\begin{equation}
\label{eq:troplin}
L_{\mu} = \left\{x\in \TT^n \mid \forall \,\,T\in \binom{[n]}{k+1}, \text{ the minimum in}\,\, \bigoplus\limits_{i\in T} \mu_{T\setminus i}\odot x_i\,\, \text{is A.A.T or equals } \infty\right\},  
\end{equation}
where A.A.T means ``achieved at least twice''. If $\mu\in \T\Gr_p(k,n)$ has rational coefficients and $\KK$ is algebraically closed, then by the Fundamental Theorem of Tropical Algebraic Geometry \cite[Theorem 3.2.3]{MS15}, $\mu = \val(\X)$ for some Pl\"ucker vector $\X$ over $\KK$.
 
The tropical linear space corresponding to $\mu$ coincides with the tropicalization of the linear space corresponding to $\X$, i.e., $\trop(L_{\X}) = L_{\mu}$ \cite[Theorem 3.8]{SS04}.
If $\mu$ is a valuated matroid which is not in the tropical Grassmannian, $L_{\mu}$ is not the tropicalization of a linear space, however there exist reasons to still call such $L_{\mu}$ a tropical linear space (see, for example, \cite[Theorem 6.5]{FIN13}).\\

The study of tropical linear spaces equals the study of the valuated matroids, which relies heavily on a polyhedral view of matroids. Valuated matroids are characterized by being vectors which induce a regular subdivision of a matroid polytope into matroid polytopes (Speyer \cite{Spe08}). The tropical linear space of an unvaluated matroid (such as the case where the valuation is trivial) is a fan whose support is the same as a fan known as the \emph{Bergman fan}.

The condition that a linear subspace contains another can be read off from their Pl\"ucker vectors via the so called incidence relations. 
In \cite{HAQ12}, Haque shows that the incidence relations tropicalize well:

\begin{proposition}[\cite{HAQ12}]
\label{prop:incidence}
Let $\mu_1$ and $\mu_2$ be two valuated matroids on $[n]$ of rank $k_1 \le k_2$. Then $L_{\mu_1} \subseteq L_{\mu_2}$ if and only if 
\[
\forall S\in\binom{[n]}{k_1-1}, \enspace T\in \binom{[n]}{k_2+1} \text{ the minimum in } \bigoplus_{i\in T\setminus S} (\mu_1)_{Si}\odot (\mu_2)_{T\backslash i}
\]
is achieved at least twice.
\end{proposition}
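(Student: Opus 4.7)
The plan is to use the characterization of tropical linear spaces as tropical convex hulls of their valuated cocircuits. For a valuated matroid $\mu$ of rank $k$ on $[n]$ and each $S\in\binom{[n]}{k-1}$, define the valuated cocircuit $c^S\in\TT^n$ by $c^S_i := \mu_{Si}$, with the convention that $\mu_J = \infty$ whenever $|J|\ne k$ (in particular $c^S_i = \infty$ for $i\in S$). A preliminary check using \eqref{eq:troplin} shows $c^S\in L_\mu$: for any $T\in\binom{[n]}{k+1}$, the expression $\bigoplus_{i\in T}\mu_{T\setminus i}\odot c^S_i$ collapses to $\bigoplus_{i\in T\setminus S}\mu_{T\setminus i}\odot\mu_{Si}$, whose minimum is achieved at least twice by the tropical Pl\"ucker relations. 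The key structural input, going back to work of Murota--Tamura on valuated matroids and standard in tropical geometry (cf.\ \cite{Spe08}), is that $L_\mu$ is tropically convex and equals the tropical convex hull of $\{c^S : S\in\binom{[n]}{k-1}\}$.

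Both directions of the equivalence then reduce to a single substitution. Plugging $c^S$ into the defining condition of $L_{\mu_2}$ (that is, \eqref{eq:troplin} with $\mu=\mu_2$) for $T\in\binom{[n]}{k_2+1}$, the terms with $i\in S$ vanish and we see that $c^S\in L_{\mu_2}$ is \emph{equivalent} to the minimum in $\bigoplus_{i\in T\setminus S}(\mu_1)_{Si}\odot(\mu_2)_{T\setminus i}$ being achieved at least twice, which is precisely the tropical incidence relation indexed by $(S,T)$. For the forward direction, if $L_{\mu_1}\subseteq L_{\mu_2}$ then each cocircuit $c^S$ of $L_{\mu_1}$ lies in $L_{\mu_1}\subseteq L_{\mu_2}$, yielding all incidence relations. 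For the converse, the incidence relations force $c^S\in L_{\mu_2}$ for every $S$; since $L_{\mu_1}$ is tropically spanned by its cocircuits and $L_{\mu_2}$ is tropically convex, we conclude $L_{\mu_1}\subseteq L_{\mu_2}$.

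The main obstacle is not the combinatorial bookkeeping but the invocation of the two structural facts: that $L_{\mu_1}$ is the tropical convex hull of its valuated cocircuits, and that $L_{\mu_2}$ is closed under tropical scalar multiplication and tropical addition. A self-contained treatment of these would require developing circuit/cocircuit duality and the tropical exchange axioms for valuated matroids. Once these are taken as known, the argument reduces, as sketched above, to a direct translation between membership of the fundamental cocircuits $c^S$ in $L_{\mu_2}$ and the tropical incidence relations.
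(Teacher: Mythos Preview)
The paper does not give its own proof of this proposition; it is simply quoted from Haque \cite{HAQ12}. Your argument via valuated cocircuits and tropical convexity is correct and is essentially the standard route to this result: the substitution you describe shows that membership of each cocircuit $c^S$ of $\mu_1$ in $L_{\mu_2}$ is literally the incidence relation indexed by $(S,T)$, and the two structural facts you invoke (that $L_{\mu_1}$ is the tropical convex hull of its valuated cocircuits, and that $L_{\mu_2}$ is tropically convex) close the loop in both directions. Your honest flagging of these two ingredients as the real content is appropriate; with them in hand the rest is indeed bookkeeping.
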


\subsection{The Tropical Stiefel Map}

The classical Stiefel map sends full rank $k\times n$ matrices to their row span. 
%the map $\KK^{k\times 2n}\longrightarrow \Gr(k,2n)$ taking a full rank matrix to its row span is called the Stiefel map and its image is called the Stiefel image. 
In \cite{FR15}, Fink and Rinc\'on introduced a tropical analogue. Let $A\in \TT^{k\times n}$ be a matrix with tropical entries. For $\Jm\in \binom{[n]}{k}$, let $A_{\Jm}$ be the sub-matrix with columns indexed by $\Jm$. Then:

\begin{definition}[\cite{FR15}]\label{def:stiefel}
The \emph{tropical Stiefel map} $\pi: \TT^{k\times n}  \dashrightarrow  \TT^{\binom{[n]}{k}}$ is given by
\[
\pi(A)_{\Jm} := \tdet(A_{\Jm}),
\]
where $\tdet$ denotes the tropical determinant, that is, the usual determinant computed with tropical operations. It is only defined for matrices such that at least one of such determinants is different from $\infty$, which is one notion of tropical full rank \cite{DSS05}.
\end{definition}

Not all valuated matroids arise this way. The matroids that arise this way are known as \emph{transversal matroids} and the corresponding valuated matroids are a natural generalization so they are also called \emph{transversal}. For a general introduction to (unvaluated) transversal matroids, we recommend \cite{BRU87}.

For some valuated matroid $\mu$, a matrix $A$ such that $\pi(A) = \mu$ is called a \emph{presentation} of $\mu$. We naturally have $\pi(A)\in \T\Gr_0(k,n)$ but not all valuated matroids that are representable over characteristic $0$ are of this form (see, for example, \cite[Figure 1]{FR15}). The space of transversal valuated matroids in $\T\Gr_0(k,n)$ is called the \emph{Stiefel image}.

The fibers of the tropical Stiefel map were explicitly described by Fink and the second author \cite[Theorem 6.6]{FO19}. This generalizes the characterization of presentations of valuated matroids by Brualdi and Dinolt \cite[Theorem 4.7]{BD72}. It is easy to see that any row of $A$ is a point in $L_{\pi(A)}$. In general, the fiber $\pi^{-1}(\mu)$ is the orbit of a certain fan in $L_{\mu}^k$ under the action of permuting rows.

\section{Tropical Analogies}\label{sec:tropicalanalogies}
In  this section, we describe the tropical analogues of the five equivalent classical realizations of isotropic subspaces discussed in Section \ref{sec:symplecticgrassmannian} following section \ref{sec:tropicalgrassmannian}, and then we go on to discuss the analogies between them. The key point to catch is that these five realizations are no longer tropically equivalent in general. 

\subsection{Tropical Analogues of the Five Equivalent Characterizations of Classical Isotropic Subspaces}
\label{subsec:anal}
We start by introducing the analogue of the tropical Grassmannian; the \emph{tropical symplectic Grassmannian} which will be denoted by $\T\grass(k,2n)$. 

\begin{definition}
The \emph{tropical symplectic Grassmannian} $\T\grass_p(k,2n)$ is the tropicalization of the symplectic Grassmannian $\grass_\KK$. That is
\[\T\grass_p(k,2n)=
\{w \in \TP^{\binom{2n}{k}-1} \mid w \in \trop(V(f)) \quad \forall \,\, f \in S_{k,2n}(\KK)\}.
\]
\end{definition}

Again, the tropical symplectic Grassmannian depends on the characteristic $p$ of the field $\KK$, but we omit the subscript when it is not relevant.

The following is a consequence of the Structure Theorem \cite[Theorem 3.3.5]{MS15}:% \ref{thm:structure}:
\begin{corollary}
The tropical symplectic Grassmannian $\T\grass(k,2n)$ is a  pure polyhedral fan in $\TP^{\binom{2n}{k}-1}$ of dimension $\frac{k(4n-3k+1)}{2}$.
\end{corollary}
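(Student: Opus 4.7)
The plan is to invoke the Structure Theorem \cite[Theorem 3.3.5]{MS15} directly, after checking that its hypotheses are satisfied by the symplectic Grassmannian. Recall that the Structure Theorem states: if $X$ is an irreducible $d$-dimensional subvariety of a torus (or equivalently, an irreducible projective variety cut out by homogeneous equations), then $\trop(X)$ is the support of a pure polyhedral complex (fan, in the projective/homogeneous case) of dimension $d$. So the task is purely one of bookkeeping: verify that $\grass(k,2n)$ meets these hypotheses and read off the dimension.

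First, I would recall from Section~\ref{sec:symplecticgrassmannian} that $\grass_{\KK}(k,2n)$ is an irreducible projective variety of dimension $\frac{k(4n-3k+1)}{2}$, as cited from \cite{MWZ20}. Irreducibility is essential for the Structure Theorem to give a \emph{pure} complex (otherwise one only gets pure purity on each component, not globally). Next, I would note that the Pl\"ucker-Symplectic ideal $S_{k,2n}(\KK)$ is generated by homogeneous polynomials: the Pl\"ucker relations \eqref{eqn:plueckerrelations} are homogeneous of degree $2$ and the symplectic relations \eqref{eqn:symplecticrelations} are homogeneous of degree $1$. Consequently, $\trop(\grass(k,2n))$ is closed under translation by the all-ones vector $\mathbb{R}(1,\ldots,1)$, which is exactly what allows us to descend to the quotient $\TP^{\binom{2n}{k}-1}$ and obtain a fan rather than a more general polyhedral complex.

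With these two checks in hand, the Structure Theorem gives immediately that $\T\grass_p(k,2n)$ is a pure polyhedral fan of dimension equal to $\dim \grass_{\KK}(k,2n) = \frac{k(4n-3k+1)}{2}$. Since nothing in this application is characteristic-dependent (the Structure Theorem applies over any algebraically closed field with a non-trivial valuation, and the dimension formula for the symplectic Grassmannian holds in any characteristic), the conclusion holds for all $p$, justifying the omission of the subscript.

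No serious obstacle is expected here, and I would keep the writeup to a couple of sentences pointing to \cite[Theorem 3.3.5]{MS15}, the dimension statement from \cite{MWZ20}, and the homogeneity of the generators of $S_{k,2n}(\KK)$. The only mild subtlety worth flagging to the reader is the passage from subvarieties of a torus to closed subvarieties of projective space: one should briefly note that the Pl\"ucker embedding lands in $\PP^{\binom{2n}{k}-1}$ and that homogeneity of the defining ideal ensures the tropicalization is a fan in $\TP^{\binom{2n}{k}-1}$, rather than merely a polyhedral complex in $\TT^{\binom{2n}{k}}$.
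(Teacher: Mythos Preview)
Your proposal is correct and follows exactly the paper's approach: the paper simply states that the corollary is a consequence of the Structure Theorem \cite[Theorem 3.3.5]{MS15}, and you have spelled out the routine verifications (irreducibility and dimension from \cite{MWZ20}, homogeneity of the generators of $S_{k,2n}(\KK)$) that make that citation apply.
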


Below we provide explicit examples of the fan $\T\grass(k,2n)$ for small $k$ and $n$ that we compute using the \emph{Tropical} package (Améndola et al \cite{TROP17}) based on \emph{Gfan} (Jensen \cite{JEN08}) in \emph{Macaulay2} (Grayson and Stillman \cite{GS97}). [We use the same software for all other computational examples in this paper.]

\begin{example}[k=2, n=3] The ideal $S_{2,6}(\KK)$ of $\grass(2,6)$ is generated by the corresponding Pl\"ucker relations and the symplectic relation: $\X_{1,\ov{1}} + \X_{2,\ov{2}} +\X_{3,\ov{3}}$. Here $\T\grass(2,6)$ is a pure $7$-dimensional fan in $\TP^{14}$ with a lineality space of dimension 3. Its f vector modulo the lineality space is $(28, 180, 420, 315)$, so it has 28 rays and 315 maximal cones.
\end{example}

\begin{example}[k=3, n=3] For $\grass_0(3,6)$, the ideal $S_{3,6}(\CC)$ is generated by the respective Pl\"ucker relations and the following set of symplectic relations:
\[\small\X_{2,\ov{1},\ov{2}} + \X_{3,\ov{1},\ov{3}},\,\, \X_{1,3,\ov{1}} + \X_{2,3,\ov{2}},\,\, -\X_{1,\ov{1},\ov{2}}+\X_{3,\ov{2},\ov{3}},\]
\[\X_{1,\ov{1},\ov{3}} +\X_{2,\ov{2},\ov{3}},\,\, \X_{1,2,\ov{1}} + \X_{2,3,\ov{3}},\,\, \X_{1,2,\ov{1}} - \X_{2,3,\ov{3}}.\]
The fan $\T\grass_0(3,6)$ is pure of dimension 6 in $\TP^{19}$, with a 3-dimensional lineality space. Its f vector modulo the lineality space is $(35, 157, 159)$.
\end{example}

We now define the symplectic counterpart of the usual Dressian $\Dr(k,2n)$; the \emph{symplectic Dressian} which we denote by $\Sp\Dr(k,2n)$:

\begin{definition}
The \emph{symplectic Dressian} $\Sp\Dr(k,2n)\supseteq \T\grass(k,2n)$ is the prevariety corresponding to the ideal $S_{k,2n}(\KK)$, i.e., the set of valuated matroids (tropical Pl\"ucker vectors) $\mu$ satisfying:
\begin{equation}
\label{eq:tropsymp}
\forall\,\, S\in \binom{[2n]}{k-2}, \text{ the minimum in }\quad \bigoplus_i \mu_{Si\overline{i}}\quad \text{is achieved at least twice.}    
\end{equation}

We call the elements of $\Sp\Dr(k,2n)$, \emph{tropical symplectic Pl\"ucker vectors}.
\end{definition}

We have seen the notion of a tropical linear space $L_{\mu}$ associated to a valuated matroid $\mu\in \Dr(k,2n)$ in Section \ref{sec:tropicalgrassmannian}. The following definition is a tropical analogue of being isotropic, which was first introduced by Rinc\'on in \cite{Fel12}. Under the scope of Coxeter matroids (Borovik et al \cite{BGW03}), Rinc\'on \cite{Fel12} studies valuated matroids corresponding to type {\tt D}, while here we are considering type {\tt C}. However, orthogonality and isotropicity in type {\tt D} vs type {\tt C} differ by just a sign, which becomes irrelevant under tropicalization, so his definition remains the relevant one for us:

\begin{definition}
We say that two points $x,y\in  \TT^{2n}$ are \emph{(tropically) orthogonal} if the minimum is achieved at least twice in
	\[
	%\forall \,\,x,y\in L\text{ the minimum is achieved twice in } 
	\bigoplus_{i\in[2n]} x_i \odot y_{\overline{i}}.
	\]
A tropical linear space $L\subseteq \TT^{2n}$ is \emph{isotropic} if all of its points are pairwise \emph{orthogonal}.
\end{definition}

Recall the tropical Stiefel map $\pi$ from Definition \ref{def:stiefel}. In the following definition, we establish two more analogies for transversal valuated matroids:

\begin{definition}
Let $\mu\in \Dr(k,2n)$ and suppose $\mu =\pi(A|B)$, for $A,B\in \TT^{k\times n}$.
\begin{enumerate}
\item If the rows of $(A|B)$ are (tropically) orthogonal, we say $\mu$ has a \emph{row-orthogonal} presentation.
\item If $A\odot B^T$ is symmetric, we say $\mu$ has a \emph{symmetric} presentation. 
\end{enumerate}
\end{definition}

To summarize, we have the following tropical analogues of the five equivalent characterizations of isotropic linear subspaces with respect to a symplectic form:

\begin{enumerate}
	\item $\mu\in \T\grass_p(k,2n)$.
	\item $\mu\in \Sp\Dr(k,2n)$.
	\item $L_{\mu}$ is isotropic.
	\item $\mu$ has a row-orthogonal presentation.
	\item $\mu$ has a symmetric presentation.
\end{enumerate}

Since it turns out they do not agree in general, it is convenient to give a name to each of the sets they define, respectively, the:

\begin{enumerate}
	\item \emph{tropical symplectic Grassmannian.}
	\item \emph{symplectic Dressian.}
	\item \emph{isotropic region.}
	\item \emph{row-orthogonal Stiefel image.}
	\item \emph{symmetric Stiefel image.}
\end{enumerate}

\subsection{Implications between the Five Tropical Analogues} 
We are now set to examine when does one of the analogues above imply another.

To begin with, we examine the cases when $\Sp \Dr(k,2n)$ is equal to $\T\grass(k,2n)$. It turns out that the two fans are equal when $k=2$, as stated in the following theorem. We keep the proof for Section \ref{sec:ranktwo}, where we study the rank two case in detail.

\begin{theorem}\label{thm:rank2}
The set of Pl\"ucker and symplectic relations forms a tropical basis for the ideal $S_{2,2n}(\KK)$, i.e., the symplectic Dressian $\Sp\Dr(2,2n)$ and the tropical symplectic Grassmannian $\T\grass_p(2,2n)$ coincide. In particular, $\T\grass_p(2,2n)$ does not depend on $p$.
\end{theorem}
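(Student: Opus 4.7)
The plan is to prove the nontrivial inclusion $\Sp\Dr(2,2n)\subseteq\T\grass_p(2,2n)$; the reverse is immediate because the Plücker and symplectic polynomials generate $S_{2,2n}(\KK)$. Characteristic-independence of $\T\grass_p(2,2n)$ then follows because $\Sp\Dr(2,2n)$ is defined with no reference to $p$.

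Given $\mu\in\Sp\Dr(2,2n)$, I would first apply \Cref{thm:rank2A} to conclude that $\mu\in\T\Gr_p(2,2n)$, and then invoke the fundamental theorem of tropical algebraic geometry to obtain a $2$-dimensional subspace $L\subseteq\KK^{2n}$ whose Plücker vector $\X$ satisfies $\val(\X)=\mu$. The core task is to deform $L$ to an isotropic subspace $L'$ while preserving $\val(\X(L'))=\mu$. Since $\grass(2,2n)$ is the codimension-one hyperplane section of the smooth irreducible variety $\Gr(2,2n)$ cut out by the single linear form $\ell=\sum_i \X_{i\bar{i}}$, I would proceed in two steps. First, refine the choice of lift $\X$ so that $\val(\ell(\X))>m:=\min_i\mu_{i\bar{i}}$: the tropical symplectic relation states that this minimum is attained at least twice, hence the initial form of $\ell$ at $\mu$ is a nontrivial linear form in at least two variables over the residue field, whose vanishing cuts out a proper hyperplane of the positive-dimensional initial degeneration of $\Gr(2,2n)$; selecting $\X$ whose leading term lies on this hyperplane yields the required cancellation. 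Second, apply Hensel's lemma iteratively to $\ell=0$ on the smooth Grassmannian at $\X$: since $d\ell$ restricted to the tangent space of $\Gr(2,2n)$ is nonzero (as $\grass$ has strictly smaller dimension than $\Gr$), Newton iteration in the complete valued field $\KK$ converges to an exact solution $\X'\in\grass(2,2n)$ whose correction $\X'-\X$ has valuation exceeding $\max_I\mu_I$, so that $\val(\X')=\mu$ as required.

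The main obstacle I anticipate is the quantitative valuation bookkeeping in the Hensel step: the first-step cancellation guarantees only $\val(\ell(\X))>m$, whereas the successive Newton corrections must eventually have valuation exceeding each $\mu_I$, not only those indexed by pairs $\{i,\bar{i}\}$. Quadratic convergence and completeness of $\KK$ are what make this go through, but the precise estimates have to be kept track of. A more concrete alternative, which may well be the route taken in the paper, is to exploit the phylogenetic-tree description of rank-$2$ valuated matroids due to Speyer--Sturmfels: realize $\mu$ as the valuated matroid of $\operatorname{span}((1,\ldots,1),(p_1,\ldots,p_{2n}))$ for suitable $p_i\in\KK$, so that the isotropy condition reduces to the single scalar equation $\sum_i(p_{\bar{i}}-p_i)=0$, and then adjust the $p_i$ within their valuation classes (along the leaves of the associated tree) to solve it; the tropical symplectic relation guarantees that the required cancellation is combinatorially available.
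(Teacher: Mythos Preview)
Your overall strategy matches the paper's: lift $\mu$ to a Pl\"ucker vector $\X$ over Puiseux series using \Cref{thm:rank2A}, then deform $\X$ inside $\Gr(2,2n)$ to land on the single hyperplane $\ell=\sum_i\X_{i\bar i}=0$ while keeping $\val(\X)=\mu$. The difference is in how the deformation is carried out, and this is exactly where your acknowledged obstacle disappears in the paper's argument.

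The paper does not run Hensel on the abstract variety $\Gr(2,2n)$. Instead it picks an index $i$ with $\val(\X_{i\bar i})$ minimal and multiplies column $i$ of a matrix presentation of $L$ by a scalar $\alpha\in\KK\{\!\{t\}\!\}$ with $\val(\alpha)=0$. This has two effects: it replaces $\X_I$ by $\alpha\X_I$ when $i\in I$ and leaves $\X_I$ unchanged otherwise, so \emph{every} valuation $\mu_I$ is preserved automatically, regardless of how large or small it is; and it turns the isotropy condition into the single scalar equation $\alpha\,\X_{i\bar i}+\sum_{j\ne i}\X_{j\bar j}=0$, which is solved by $\alpha=-\X_{i\bar i}^{-1}\sum_{j\ne i}\X_{j\bar j}$. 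The tropical symplectic relation (minimum attained twice) is used only to guarantee that $i$ can be chosen so that this $\alpha$ has valuation exactly $0$. The paper phrases the construction of $\alpha$ as a recursive series, but the content is just this.

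By contrast, your Hensel step asserts that the Newton correction has valuation exceeding $\max_I\mu_I$; this does not follow from what you have set up. After your first refinement you only know $\val(\ell(\X))>m=\min_i\mu_{i\bar i}$, and the size of the Newton correction is controlled by $\val(\ell(\X))$ and the valuation of $d\ell$ along the chosen tangent direction, not by $\max_I\mu_I$. A generic tangent move on $\Gr(2,2n)$ will perturb coordinates $\X_I$ with large $\mu_I$ by something of much smaller valuation, destroying them. The paper's column-scaling is precisely a one-parameter family of tangent directions for which this cannot happen, so the bookkeeping you flagged never arises. Your alternative via the two-vector span is closer in spirit, but the column-scaling device is still simpler: it reduces everything to solving one linear equation in one unknown.
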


In \Cref{exa:zoo-one} of the matroid zoo (\Cref{sec:matroidzoo}), we describe an explicit point that is in $\Sp\Dr(3,6)$ but not in $\T\grass(3,6)$, hence showing that these two fans are not equal.
This is in contrast with type ${\tt A}$, where $\Dr(3,6) = \T\Gr(3,6)$.
With the help of this example, we also find the missing relation to obtain a tropical basis for $\grass_2(3,6)$. The  computational information of the fan $\Sp\Dr(3,6)$ is shown below:

\begin{example}
The symplectic Dressian $\Sp \Dr(3,6)$ is a non-pure non-simplicial 7-dimensional polyhedral  fan in $\TP^{19}$ with f vector $\{35, 153, 155, 2\}$. It has 149 maximal cones; of these, 2 are of dimension 7 while 147 are of dimension 6. However, the rays of $\Sp \Dr(3,6)$ are the rays of $\T\grass(3,6)$.
\end{example}
We will show in \Cref{thm:main}, that the non-equality of the two fans for $k=n=3$ discussed above holds for all $k\geq 3, n\geq 3$.\\

We now go on to study connections between the tropical symplectic Grassmannian, the symplectic Dressian and isotropic tropical linear spaces. We first show that a tropical linear space corresponding to any valuated matroid in $\T\grass(k,n)$ is isotropic:

\begin{proposition}
\label{prop:tropgriso}
Let $\mu\in \T\grass_p(k,2n)$ be a valuated matroid in the tropical symplectic Grassmannian. Then $L_\mu$ is isotropic. 
\end{proposition}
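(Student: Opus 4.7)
The plan is to reduce the tropical statement to the classical fact that any two vectors of an isotropic subspace are orthogonal with respect to $\omega$, via the Fundamental Theorem of Tropical Algebraic Geometry.

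First, I would pass to an algebraically closed extension of $\KK$ whose value group is large enough to contain the coordinates of $\mu$ and those of any prescribed pair of points $x, y \in L_\mu$; this is a standard manoeuvre and is harmless since $\T\grass_p(k,2n)$ only depends on the characteristic. Because $\mu \in \T\grass_p(k,2n) = \trop(\grass_\KK(k,2n))$, the Fundamental Theorem (\cite[Theorem 3.2.3]{MS15}) produces a Pl\"ucker vector $\X$ of an honest isotropic subspace $L \subseteq \KK^{2n}$ with $\val(\X) = \mu$. By \cite[Theorem~3.8]{SS04}, $L_\mu = \trop(L)$.

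Next, given any $x, y \in L_\mu = \trop(L)$, I would lift them to vectors $u, v \in L$ with $\val(u)=x$ and $\val(v)=y$; existence of such lifts follows again from the Fundamental Theorem applied to the linear ideal defining $L$ (or directly from the characterization of $\trop(L)$). Since $L$ is classically isotropic, $\omega(u,v)=0$, i.e.
\[
\sum_{i=1}^n \bigl( u_i v_{\bar i} - u_{\bar i} v_i \bigr) = 0.
\]

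Finally, the non-Archimedean property says that whenever a finite sum of elements of $\KK$ vanishes, the minimum valuation of its summands must be attained at least twice. Here $\val(u_j v_{\bar j}) = x_j + y_{\bar j}$ and $\val(u_{\bar j} v_j) = x_{\bar j} + y_j$; as $j$ ranges over $[n]$ these cover all the terms $x_i \odot y_{\bar i}$ for $i \in [2n]$ (using $\bar{\bar i}=i$). Therefore the tropical minimum in $\bigoplus_{i \in [2n]} x_i \odot y_{\bar i}$ is attained at least twice, which is exactly tropical orthogonality of $x$ and $y$. Hence $L_\mu$ is isotropic.

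The only subtle step is the lifting: one must take the value group large enough so that both $\mu$ and the sampled points $x, y \in L_\mu$ admit simultaneous preimages. Once this bookkeeping is in place, the rest of the argument is a direct transfer of the classical identity $\omega(u,v)=0$ through the valuation, with no further computation required.
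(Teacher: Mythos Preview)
Your proposal is correct and follows essentially the same route as the paper's proof: lift $\mu$ to a classical isotropic subspace $L$ over a suitable valued field, identify $L_\mu$ with $\val(L)$, and then push the identity $\omega(u,v)=0$ through the valuation to obtain tropical orthogonality of any pair of points. The only cosmetic difference is that the paper fixes from the outset an algebraically closed field with surjective valuation (Hahn series), which handles all the lifting bookkeeping at once rather than per pair $(x,y)$.
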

\begin{proof}
Consider $\KK$ an algebraically closed field of characteristic $p$ such that $\val: \KK \to \TT$ is surjective, for example the field of formal Hahn series (see for example \cite{JS21}). Since $\mu \in \T\grass_p(k,2n)$, then there exists an isotropic linear space $L\subseteq \KK^{2n}$ realizing $\mu$, that is, its Pl\"ucker vector $\X$ satisfies $\val(\X) = \mu$ (here we are using that $\KK$ has a surjective valuation). Since $L$ is given by \Cref{eq:linspace} and $L_\mu$ is given by \Cref{eq:troplin}, then by \cite[Theorem 3.2.5 and Proposition 4.1.6]{MS15}, we have that $L_\mu = \val(L)$. Since any two points $x,y\in L$ satisfy $\omega(x,y) = 0$, we have that $\val(x)$ and $\val(y)$ are orthogonal, so $L_\mu$ is isotropic. 
\end{proof}

The following is a result due to Rinc\'on \cite{Fel12}. It gives a sufficient condition for a tropical linear space to be isotropic in the Lagrangian case.

\begin{proposition}[\cite{Fel12}]
\label{prop:isorincon}
Let $\mu\in \Dr(n,2n)$ be a valuated matroid. Then $L_{\mu}$ is isotropic if and only if $\mu_{\overline{\Jm}}= \mu_{[2n] \setminus \Jm}$ for all $\Jm\in \binom{[2n]}{n}$.
\end{proposition}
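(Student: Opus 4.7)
The plan is to recast tropical symplectic isotropicity as ordinary tropical orthogonality after applying the involution $\sigma\colon [2n]\to[2n]$, $i\mapsto \bar i$, and then use the standard duality between a tropical linear space and its matroid dual to translate this into a condition on $\mu$.

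First, I would let $\sigma$ act on $\TT^{2n}$ by $(\sigma y)_i := y_{\bar i}$. By the definition of tropical symplectic orthogonality, two vectors $x,y \in \TT^{2n}$ are tropically orthogonal precisely when the minimum in $\bigoplus_{i\in[2n]} x_i\odot(\sigma y)_i$ is achieved at least twice. Since $\sigma$ is merely a coordinate permutation on $\TT^{2n}$, it sends $L_\mu$ to $L_{\sigma\cdot\mu}$, where $(\sigma\cdot\mu)_{\Jm} := \mu_{\bar{\Jm}}$. Consequently $L_\mu$ is isotropic if and only if, for every $x \in L_\mu$ and every $z \in L_{\sigma\cdot\mu}$, the minimum in $\bigoplus_i x_i\odot z_i$ is attained at least twice.

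Next, I would invoke the tropical duality for valuated matroids. For any $\nu$ on $[2n]$, its dual $\nu^*$ with $\nu^*_{\Jm} := \nu_{[2n]\setminus \Jm}$ satisfies
\[
L_{\nu^*} \;=\; \bigl\{\, y\in \TT^{2n} \;:\; \forall\, x\in L_\nu,\ \text{the minimum in}\ \bigoplus_{i} x_i\odot y_i\ \text{is A.A.T.}\,\bigr\}.
\]
One inclusion follows from the identification of the circuits of $\nu$ with the cocircuits of $\nu^*$ and the fact that cocircuits generate $L_{\nu^*}$ tropically; the reverse inclusion is the well-documented tropical analogue of ``double orthogonal complement equals the original space.'' Applied with $\nu=\mu$, this gives: $L_\mu$ is isotropic iff $L_{\sigma\cdot\mu}\subseteq L_{\mu^*}$.

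Finally, I would close by a rank comparison. Both $L_{\sigma\cdot\mu}$ and $L_{\mu^*}$ are tropical linear spaces of rank $n$ on $[2n]$, so a strict containment is impossible and the above inclusion forces equality. Since a valuated matroid is determined by its tropical linear space up to a global additive constant, this gives $\mu_{\bar{\Jm}} = \mu_{[2n]\setminus \Jm} + c$ for some $c$ independent of $\Jm$. Summing both sides over all $\Jm\in\binom{[2n]}{n}$, and using that $\Jm\mapsto \bar{\Jm}$ and $\Jm\mapsto [2n]\setminus \Jm$ are both bijections of $\binom{[2n]}{n}$, yields $\binom{2n}{n}\,c = 0$, so $c=0$ and the claimed equality follows. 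The main obstacle I foresee is the duality statement in the middle step: the easy inclusion is immediate from the tropical Plücker relations, but the opposite inclusion needs either a clean citation or a direct circuit-elimination argument, and that is where the real work of the proof is concentrated; once it is in hand, the rest is essentially formal.
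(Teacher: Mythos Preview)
Your approach is essentially identical to the one the paper sketches immediately after the statement: rewrite isotropicity as $L_{\bar\mu}\subseteq L_{\mu^*}$ via the tropical orthogonal-complement description of $L_{\mu^*}$, then use that both sides have rank $n$ to force equality. So the strategy is correct and matches the paper.

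There is, however, a genuine gap in your final step. The ``sum over all $\Jm$'' argument to kill the constant $c$ only works when every $\mu_{\Jm}$ is finite; in general a valuated matroid has $\mu_{\Jm}=\infty$ for every non-basis $\Jm$, and the identity $\infty=\infty+c$ puts no constraint on $c$. A clean fix is to apply your relation twice: replacing $\Jm$ by $[2n]\setminus\overline{\Jm}$ in $\mu_{\overline{\Jm}}=\mu_{[2n]\setminus\Jm}+c$ gives $\mu_{[2n]\setminus\Jm}=\mu_{\overline{\Jm}}+c$, and combining the two yields $\mu_{\overline{\Jm}}=\mu_{\overline{\Jm}}+2c$. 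Choosing $\Jm=\overline{B}$ for any basis $B$ makes $\mu_{\overline{\Jm}}=\mu_B$ finite, so $c=0$.
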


Notice that for $k=n\le 3$, the relations $\mu_{\overline{\Jm}}= \mu_{[2n] \setminus \Jm}$ are equivalent to the symplectic relations (in other words, these relations are trivially true), so we have the following result:

\begin{proposition}
\label{prop:dressiso}
For $k=n\le 3$, $L_{\mu}$ is isotropic if and only if $\mu\in \Sp\Dr(n,2n)$.
\end{proposition}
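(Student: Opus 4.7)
My plan is to reduce the statement to Rincón's characterization (\Cref{prop:isorincon}), which says that for $\mu\in\Dr(n,2n)$, the tropical linear space $L_\mu$ is isotropic if and only if $\mu_{\overline{J}}=\mu_{[2n]\setminus J}$ for every $J\in\binom{[2n]}{n}$. It therefore suffices to show that, under the hypothesis $k=n\le 3$, this family of equalities coincides with the system of tropical symplectic relations \eqref{eq:tropsymp} cutting out $\Sp\Dr(n,2n)$.

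The first step is to observe that each tropical symplectic relation collapses to a single equality of two terms. For $n\le 3$, any $S\in\binom{[2n]}{n-2}$ has at most one element, so among the $n$ indices $i\in[n]$ at most one is excluded by $\{i,\overline{i}\}\cap S\ne\emptyset$. Hence the sum $\bigoplus_i \mu_{Si\overline{i}}$ has exactly two surviving terms $\mu_{Si\overline{i}}$ and $\mu_{Sj\overline{j}}$, and ``minimum achieved at least twice'' reduces to the equality $\mu_{Si\overline{i}}=\mu_{Sj\overline{j}}$.

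The second step is to match these with the non-trivial Rincón identities. Since $|J|=n\le 3$, the set $J$ contains at most one pair $\{i,\overline{i}\}$ (two such pairs would already occupy four elements). If $J$ contains none, then exactly one element of each pair lies in $J$, so $\overline{J}=[2n]\setminus J$ and the Rincón identity is automatic. Otherwise $J$ contains a unique pair $\{i,\overline{i}\}$, and by a small count there is a unique $j\in[n]$ with $\{j,\overline{j}\}\cap J=\emptyset$. Setting $S:=\overline{J}\setminus\{i,\overline{i}\}$, a direct check yields $\overline{J}=S\cup\{i,\overline{i}\}$ and $[2n]\setminus J=S\cup\{j,\overline{j}\}$, so Rincón's identity reads $\mu_{Si\overline{i}}=\mu_{Sj\overline{j}}$, which is exactly the symplectic relation for $S$. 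The inverse assignment, sending a symplectic relation with valid indices $\{i,j\}$ to $J:=\overline{S}\cup\{i,\overline{i}\}$, establishes a bijection between the non-trivial Rincón identities and the symplectic relations.

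Putting the two steps together gives $\mu\in\Sp\Dr(n,2n)$ iff all Rincón identities hold iff $L_\mu$ is isotropic. The argument is essentially combinatorial bookkeeping; the only care needed is the indexing in the bijection, which I do not foresee as a real obstacle. The crux is that the bound $n\le 3$ simultaneously forces every symplectic sum to have exactly two terms and every $J\in\binom{[2n]}{n}$ to contain at most one ``bad'' pair $\{i,\overline{i}\}$---both collapses fail for $n\ge 4$, consistent with the counterexamples promised in \Cref{sec:matroidzoo}.
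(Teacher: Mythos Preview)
Your proof is correct and follows exactly the paper's approach: the paper's argument is the single sentence preceding the proposition, namely that for $k=n\le 3$ the Rinc\'on identities $\mu_{\overline{J}}=\mu_{[2n]\setminus J}$ are equivalent to the tropical symplectic relations, and you have supplied the detailed bookkeeping verifying this equivalence. The only minor imprecision is calling the correspondence a bijection---each symplectic relation matches a pair of $J$'s (namely $J$ and $\overline{[2n]\setminus J}$) giving the same Rinc\'on equality---but this does not affect the validity of the argument.
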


\begin{remark}
The equations from \Cref{prop:isorincon} are also known to cut out the Lagrangian Grassmannian in the non-tropical case for any $n$. That is, a Pl\"ucker vector $\X$ satisfies $\X_{\overline{\Jm}}= \X_{[2n] \setminus \Jm}$ if and only if $\X \in \Sp\Gr(n,2n)$ \cite[Theorem 5.16]{KAR18}.
\end{remark}

The proof of \Cref{prop:isorincon} is based on the observation that the dual linear space $L_{\mu^*}$ (where $\mu^*_\Jm := \mu_{[2n]\setminus \Jm}$) consists of all points that are orthogonal to $L_\mu$ with respect to the tropical dot product. So being isotropic means that $L_{\overline{\mu}} \subseteq L_{\mu^*}$, where $\overline{\mu}_{\Jm} := \mu_{\overline{\Jm}}$. For the Lagrangian case, $\dim(L_{\overline{\mu}}) = \dim(L_{\mu^*})= n$ so they must be the same, hence \Cref{prop:isorincon}. However, for $k<n$, we can combine this approach with the \Cref{prop:incidence} to obtain a characterization of isotropic tropical linear spaces in terms of the Pl\"ucker coordinates, although not as clean as \Cref{prop:isorincon}.

\begin{proposition}
    A tropical linear space $L_{\mu}$ is isotropic if and only if 
\[
\forall\,\, S_1, S_2 \in\binom{[2n]}{k-1}, \text{ the minimum in } \bigoplus_i \mu_{S_1i}\odot \mu_{S_2\bar{i}}
\]
is achieved twice (if $i\in S_1$ or $\bar{i} \in S_2$ we assume $\mu_{S_1i} = \infty$ or $\mu_{S_2\bar{i}} = \infty$ respectively).
\end{proposition}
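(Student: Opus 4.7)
The plan is to reduce the isotropic condition to an inclusion $L_{\bar\mu} \subseteq L_{\mu^*}$ of tropical linear spaces, and then translate this inclusion into a Pl\"ucker condition by invoking Haque's tropical incidence relations (\Cref{prop:incidence}).

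First, I would set $\bar\mu_\Jm := \mu_{\bar\Jm}$ for each $\Jm \in \binom{[2n]}{k}$, so that $L_{\bar\mu} = \{\bar y : y \in L_\mu\}$ with $(\bar y)_i := y_{\bar i}$. The tropical symplectic form becomes the tropical dot product under this involution: for $x,y \in L_\mu$, the minimum in $\bigoplus_i x_i \odot y_{\bar i}$ is attained twice if and only if $x$ is dot-orthogonal to $\bar y$. Combining this with the fact (recalled in the discussion preceding the proposition) that $L_{\mu^*}$ is exactly the set of points dot-orthogonal to $L_\mu$, the condition that $L_\mu$ is isotropic is equivalent to the inclusion $L_{\bar\mu} \subseteq L_{\mu^*}$.

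Next, I would apply \Cref{prop:incidence} to $\mu_1 := \bar\mu$ of rank $k$ and $\mu_2 := \mu^*$ of rank $2n-k$ (noting $k \le n$, which is forced as soon as $L_\mu$ is isotropic). This turns the inclusion into the condition that for every $S \in \binom{[2n]}{k-1}$ and every $T \in \binom{[2n]}{2n-k+1}$, the minimum in $\bigoplus_{i\in T \setminus S} \bar\mu_{Si} \odot \mu^*_{T\setminus i}$ is achieved at least twice.

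Finally, translate back to $\mu$ using $\bar\mu_{Si} = \mu_{\bar S \bar i}$ and $\mu^*_{T\setminus i} = \mu_{([2n]\setminus T)\cup\{i\}}$, and reindex by setting $S_2 := \bar S$ and $S_1 := [2n] \setminus T$, both ranging freely over $\binom{[2n]}{k-1}$ as $S$ and $T$ do. Under this reindexing the restriction $i \in T \setminus S$ becomes exactly $i \notin S_1$ and $\bar i \notin S_2$, which matches the proposition's convention that $\mu_{S_1 i} = \infty$ when $i \in S_1$ and $\mu_{S_2 \bar i} = \infty$ when $\bar i \in S_2$. Hence the sum over $i \in T \setminus S$ agrees with the sum over all $i \in [2n]$ in the stated form, yielding the desired characterization. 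The whole argument is essentially bookkeeping, so no step stands out as a genuine obstacle; the only care required is to match the two parameter sets together with the degeneracy convention and to invoke cleanly the duality between $L_{\mu^*}$ and the tropical orthogonal complement of $L_\mu$.
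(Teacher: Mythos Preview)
Your proposal is correct and matches the paper's approach exactly: the paper explicitly states, in the paragraph preceding the proposition, that being isotropic amounts to $L_{\overline{\mu}} \subseteq L_{\mu^*}$ and that one then combines this with \Cref{prop:incidence} to obtain the stated Pl\"ucker characterization. Your reindexing via $S_1 = [2n]\setminus T$ and $S_2 = \overline{S}$ is the right bookkeeping to land on the displayed relations with the degeneracy convention.
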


Finally, we show that the symmetric Stiefel image is contained in the row-orthogonal Stiefel image:
\begin{proposition}\label{prop:orthosym}
Let $A, B\in \TT^{k\times n}$ such that $A\odot B^T$ is symmetric. Then the rows of $(A|B)$ are (tropically) orthogonal.
\end{proposition}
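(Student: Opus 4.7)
The plan is to directly unpack the tropical orthogonality condition for two rows of $(A|B)$ and recognize the two halves of the resulting sum as the two (symmetric) entries of the matrix $A \odot B^T$.

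First I would fix indices $i, \ell \in [k]$ and write $x$, $y$ for the corresponding rows of $(A|B)$. Using the block structure $(A|B)$, the orthogonality sum $\bigoplus_{s \in [2n]} x_s \odot y_{\bar{s}}$ splits according to whether $s \in [n]$ or $s \in \overline{[n]}$ as
\[
\Bigl(\bigoplus_{j=1}^n A_{i,j} \odot B_{\ell,j}\Bigr) \;\oplus\; \Bigl(\bigoplus_{j=1}^n B_{i,j} \odot A_{\ell,j}\Bigr).
\]
The crucial identification is that the first inner $\bigoplus$ equals the matrix entry $(A \odot B^T)_{i,\ell}$, while the second equals $(B \odot A^T)_{i,\ell} = (A \odot B^T)_{\ell,i}$.

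Next I would invoke the symmetry hypothesis: it gives $(A \odot B^T)_{i,\ell} = (A \odot B^T)_{\ell,i}$, so the two halves attain a common minimum value $m$. Picking minimizers $j_1$ in the first half and $j_2$ in the second produces two terms of the full sum that both equal $m$, and they sit in disjoint blocks of the indexing set (one indexed by $s = j_1 \in [n]$, the other by $s = \overline{j_2} \in \overline{[n]}$). Hence the global minimum is achieved at least twice, which is precisely tropical orthogonality of rows $i$ and $\ell$. The case $i=\ell$ requires no separate treatment—either the generic argument still applies, or one notes that then every term appears twice automatically.

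There is no real obstacle here; the statement reduces to careful bookkeeping once the block decomposition of the orthogonality sum is written out. In fact, this calculation provides the conceptual reason the symmetric Stiefel image is contained in the row-orthogonal Stiefel image: tropical orthogonality of two rows is equivalent to the equality of the $(i,\ell)$ and $(\ell,i)$ entries of $A \odot B^T$ plus an attainment statement, and the former is exactly symmetry.
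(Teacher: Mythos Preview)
Your proof is correct and follows essentially the same route as the paper: both split the orthogonality sum $\bigoplus_{s\in[2n]} x_s\odot y_{\bar s}$ into the two blocks indexed by $[n]$ and $\overline{[n]}$, identify these with $(A\odot B^T)_{i,\ell}$ and $(A\odot B^T)_{\ell,i}$, and invoke symmetry to see the minimum is attained in each block. One small caveat on your closing remark: orthogonality of the rows is \emph{not} equivalent to equality of those two entries (cf.\ Example~\ref{ex:D}, where the rows are orthogonal but $A\odot B^T$ is not symmetric), so the implication only goes one way.
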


\begin{proof}
Since $A\odot B^T$ is a $k\times k$ symmetric matrix, we have that for any $i, j$: 
%\[\langle A_i, B^t_j\rangle = \langle A_j, B^t_i\rangle\]
\[\bigoplus_{l\in[n]} A_{il} \odot B_{jl} = \bigoplus_{l\in[n]} A_{j l} \odot B_{il}.\]

This implies that rows $i$ and $j$ of $(A|B)$ are orthogonal, since
\[
\bigoplus_{l\in[2n]} (A|B)_{il} \odot (A|B)_{j\bar{l}} = \left(\bigoplus_{l\in[n]} A_{il}  \odot B_{j l} \right)\oplus \left(\bigoplus_{l\in[n]} B_{il}  \odot A_{j l}\right).
\]

%as it literally says that the minimum for $1\le l \le n$ equals the minimum for $\bar{1}\le l \le \bar{n}$.
\end{proof}

The following is our main theorem. It combines the results from this section and Sections \ref{sec:ranktwo}, \ref{sec:directsums}, \ref{sec:matroidzoo} to give a full description of all the implications.
\begin{theorem}
\label{thm:main}
None of the statements (1)-(5) are equivalent in general. The following is a complete list of the implications between them for each $k$ and $n$:
\begin{itemize}
	\item[(a)] In general, $(1)\Rightarrow (2)$ and $(1)\Rightarrow (3)$.
	\item[(b)] For transversal valuated matroids, $(3)\Rightarrow (4)$ and $(5)\Rightarrow (4)$.
	\item[(c)] For $k=1$, they are all equivalent.
	\item[(d)] For $k=2$, $(1) \Leftrightarrow (2)$.
	\item[(e)] For $k=n\le 3$, $(2) \Leftrightarrow (3)$.
\end{itemize}
There is an example showing that any other implication fails (even when restricting to transversal valuated matroids, for the cases involving (4) and (5)).
\end{theorem}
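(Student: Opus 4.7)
The plan is to split the argument in two passes: first verifying each listed implication (a)--(e), then refuting every implication not on the list.

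\textbf{Pass 1 (positive implications).} In (a), the implication $(1)\Rightarrow(2)$ is immediate, since the Pl\"ucker and symplectic relations lie in the Pl\"ucker-Symplectic ideal $S_{k,2n}(\KK)$, so any point in its tropicalization automatically lies in the prevariety they cut out. The implication $(1)\Rightarrow(3)$ is precisely \Cref{prop:tropgriso}. For (b), $(5)\Rightarrow(4)$ is \Cref{prop:orthosym}; for $(3)\Rightarrow(4)$ under the transversal hypothesis, I would invoke the Fink--Olarte description of the tropical Stiefel fibre to pick a presentation whose rows lie in $L_\mu$, so that isotropy of $L_\mu$ immediately delivers row-orthogonality. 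For (c), the rank-one symplectic Grassmannian equals the full Grassmannian (since $\omega(v,v)=0$), the symplectic relations are indexed by $\binom{[2n]}{-1}=\emptyset$ and hence vacuous, and a direct check shows that every rank-one tropical linear space is isotropic and that the unique row of any presentation is tropically self-orthogonal (the minimum in $\bigoplus_i v_i\odot v_{\bar i}$ is attained both at $i$ and at $\bar i$), so all five conditions collapse. The remaining positive items are \Cref{thm:rank2} for (d) and \Cref{prop:dressiso} for (e).

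\textbf{Pass 2 (negative direction).} The plan is to enumerate every implication not covered by (a)--(e), stratified by the pair $(k,n)$, and match each surviving candidate with an explicit tropical Pl\"ucker vector from \Cref{sec:matroidzoo} violating it. Because the statement claims completeness over every $(k,n)$, each concrete counterexample produced at some minimal $(k_0,n_0)$ must be promoted to all admissible larger $(k,n)$; this is handled by the direct-sum constructions of \Cref{sec:directsums}, which describe how each of the five properties behaves under $\mu_1\oplus\mu_2$ and preserve transversality whenever the relevant arrow involves (4) or (5).

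\textbf{Main obstacle.} I expect the delicate part to be this propagation step. Verifying a single refuting example in a small case reduces to a finite tropical-basis or Gr\"obner-basis computation, of the same flavour as the \emph{Macaulay2}/\emph{Gfan} calculations already displayed for $\T\grass_0(3,6)$ and $\Sp\Dr(3,6)$; but showing that a failure persists across all larger $(k,n)$ without accidentally reactivating one of the equivalences in (c)--(e)---especially around the boundaries $k=2$ and $k=n\le 3$, and under the transversality restriction governing (4) and (5)---requires careful bookkeeping. Ensuring that direct sums do not silently satisfy an implication that their summands broke is the technical heart of the negative direction, and is what \Cref{sec:directsums} together with the catalogue of \Cref{sec:matroidzoo} must jointly accomplish.
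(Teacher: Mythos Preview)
Your proposal is correct and mirrors the paper's own proof almost exactly: the same references for the positive implications (a)--(e), and the same two-step strategy of minimal counterexamples from \Cref{sec:matroidzoo} promoted to all larger $(k,n)$ via \Cref{lemma:sum} for the negative direction. One minor simplification: for $(3)\Rightarrow(4)$ you do not need the Fink--Olarte fibre description, since the rows of \emph{any} presentation $A$ already lie in $L_{\pi(A)}$, so isotropy of $L_\mu$ makes every presentation row-orthogonal.
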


Notice that set $(1)$, the tropical symplectic Grassmannian, is special in that it depends on the characteristic $p$. However, \Cref{thm:main} remains true regardless of the characteristic chosen.

\begin{proof}
The proof is summarized in \Cref{fig:zoomap}, the map of the matroid zoo.

\begin{itemize}
	\item[(a)] $(1)\Rightarrow (2)$ is trivial and $(1)\Rightarrow (3)$ is \Cref{prop:tropgriso}. 
	\item[(b)] For transversal valuated matroids, $(3)\Rightarrow (4)$ is immediate, since the rows of $A$ are always points in $L_{\pi(A)}$ and  $(5)\Rightarrow (4)$ is \Cref{prop:orthosym}.
	\item[(c)] For $k=1$ all spaces are equal to the usual Dressian $\Dr(1,2n)$.
	\item[(d)] This is \Cref{thm:rank2}.
	\item[(e)] This is \Cref{prop:dressiso}.
\end{itemize}

Now to show that there are no other implications, we provide a complete list of \emph{minimal} counterexamples in \Cref{sec:matroidzoo}.
We call them minimal in the following sense: there is no example of rank $k'\le k$ on $2n'<2n$ elements with $n'-k'\le n -k$ showing that the same implication fails.
The reason for this notion of minimality is that we can use \Cref{lemma:sum} to extend any counterexample to a counterexample with more elements, without decreasing the rank nor the co-rank. So it is enough to consider the following minimal counterexamples:

\begin{itemize}
    \item \Cref{exa:zoo-one} shows that neither (2) nor (3) imply (1) for $k=n=3$.
    \item Examples \ref{ex:4points} and \ref{ex:cube} show that (3) does not imply (2) for $k=2$, $n=3$ and $k=n=4$, respectively. \Cref{ex:4points} is also a minimal example for (3) not implying (1).
    \item \Cref{ex:D} shows that none imply (4), for $k=n=2$.
    \item \Cref{ex:E} shows that neither (4) nor (5) imply (1), (2) or (3), for $k=n=2$.
    \item Examples \ref{ex:2lines} and \ref{ex:G} show that (2) does not imply (3) or (4), for $k=3$, $n=4$ and $k=n=4$, respectively.
\end{itemize}

\end{proof}

\section{Rank Two Case}\label{sec:ranktwo}
In this section, we prove Theorem \ref{thm:rank2} and deduce from it enumerative information for $\T\grass(2,2n)$.

\subsection{Coincidence of the Tropical Symplectic Grassmannian with the Symplectic Dressian}
Notice that the ideal $\I_{2,2n}$ for $\grass(2,2n)$ is generated by the following $\binom{2n}{4}$ Pl\"ucker relations
\begin{equation}\label{eqn:relationsrk2}
\X_{i,j}\X_{k,l}-\X_{i,k}\X_{j,l}+\X_{i,l}\X_{j,k}=0\quad \text{for}\quad 1\leq i<j<k<l\leq \ov{n},
\end{equation}
and a single linear relation
\begin{equation}\label{eq:rk2}
    \X_{1,\ov{1}} +\cdots+\X_{n,\ov{n}}=0.
\end{equation}

\subsection*{Proof of ~Theorem \ref{thm:rank2}}
We want to prove that $\Sp\Dr(2,2n)=\T\grass_p(2,2n)$.
We know that isotropic linear spaces satisfy a single linear relation \eqref{eq:rk2} when $k=2$.

Obviously, $\T\grass_p(2,2n) \subseteq \Sp\Dr(2,2n)$, so let $\mu \in \Sp\Dr(2,2n)$. We have that the minimum in $\mu_{1,\ov{1}}\oplus\dots\oplus \mu_{n,\ov{n}}$ is achieved twice. By Theorem \ref{thm:rank2A} we know that $\mu\in \T\Gr(2,2n)$. Let $\KK$ be an algebraically closed field of characteristic $p$ and $\KK\ldb t \rdb$ be the field of Puiseux series with coefficients in $\KK$. Suppose $\mu$ has rational entries. Then there exists a linear subspace $L \subseteq (\KK\ldb t \rdb)^{2n}$ whose Pl\"ucker vector $\X$ satisfies $\val(\X_I) = \mu_I$ for every $I\in \binom{[2n]}{2}$. 

It may be that $L$ is not isotropic, i.e., $\X$ does not satisfy \Cref{eq:rk2}. However, we have that the minimum of $$\bigoplus\limits_{i\in [n]}\val(\X_{i,\ov{i}})$$ is achieved twice. Let $i$ be such that $\val(\X_{i,\ov{i}})$ achieves the minimum and assume it is not infinite (otherwise we would already have that $L$ is isotropic). Consider a matrix $A\subseteq \KK\ldb t \rdb^{2\times 2n}$ such that its rows form a basis of $L$.

Notice that multiplying column $i$ by a factor $\alpha\in \KK\ldb t \rdb$ with valuation $0$, changes $\X_I$ by a factor of $\alpha$ if $i\in I$, otherwise it remains unchanged. However, since $\val(\alpha) = 0$ we observe that $\val(\X_I)$ remains unchanged anyway. We are going to show that there exists such $\alpha$ such that the tropical linear space $L'$, generated by the rows of the matrix $A'$ which is the result of multiplying column $i$ of $A$ by $\alpha$, is isotropic. By the discussion above, the valuation of the Pl\"ucker vector of $L'$ is $\mu$ and this will show that $\mu$ is in the tropical symplectic Grassmannian.

Let $a = \val(\X_{1,\ov{1}}+\dots+ \X_{n,\ov{n}})$. If $a= \infty$ we have already that $L$ is isotropic. So suppose $a$ is finite. By the axioms of valuation, $a\geq \val(\X_{i,\ov{i}})$.
If $a=\val(\X_{i,\ov{i}})$, consider 
\[
b = \frac{\overline{-t^{-a}\sum \limits_{j\ne i}^n \X_{j,\ov{j}}}}{\overline{t^{-a}\X_{i,\ov{i}}}}
\]
where $\overline{x}$ denotes the value of $x$ in the residue field (which in this case is $\KK$). Since the minimum is achieved twice for $\val(\X_{j,\ov{j}})$, without loss of generality we can assume that $i$ is such that $b \ne 0$. We have now that 
\[
\val(\X_{1,\ov{1}} + \cdots + b\X_{i,\ov{i}}+\cdots+ \X_{n,\ov{n}}) >\val(\X_{i,\ov{i}}).
\]
Let $m$ be least common denominator of all the exponents of $t$ in all entries of $A$. We are going to construct $\alpha$ recursively as follows: 
\begin{enumerate}
    \item Start with $\alpha_1 = b$ where $b=1$ if $a>\val(\X_{i,\ov{i}})$ and let $S_1 = \X_{1,\ov{1}} + \cdots + \alpha_1\X_{i,\ov{i}}+\cdots + \X_{n,\ov{n}}$.
    \item Let $\lambda_j = \val(S_j) - \val(\X_{i,\ov{i}})$ which is a positive rational and let $c_j = \overline{-t^{-\val(S_j)} S_j}$.
    \item Define $\alpha_{j+1} = \alpha_j +t^{\lambda_j}c_j$  and $S_{j+1} = \X_{1,\ov{1}} + \cdots + \alpha_{j+1}\X_{i,\ov{i}}+\cdots +\X_{n,\ov{n}}$.
    \item Repeat the two previous steps infinitely (or until $S_j = 0$). 
    \item Let $\alpha$ be the resulting series, that is 
    \[\alpha := b + \sum_{j=1}^\infty c_j t^{\lambda_j}.\]
\end{enumerate}
We have that $\alpha$ is a Puiseux series since all $\lambda_j$ share the denominator $m$. Furthermore $\val(\alpha) = 0$. Notice that $\val(S_{j+1}) \ge \val(S_j) +\dfrac{1}{m}$, so we must have that $\val(\X_{1,\ov{1}} + \cdots+  \alpha \X_{i,\ov{i}}+\cdots +\X_{n,\ov{n}}) = \infty$ as we wanted. 
\qed

\subsection{Coincidence with the Space of Phylogenetic Trees}
We consider the space of phylogenetic trees with $2n$ labeled leaves (studied by Billera et al \cite{BHV01}) and its underlying simplicial complex which we denote by $\mathbf{T}_{2n}$; first introduced by Buneman \cite{BNM74} and later studied by Robinson and Whitehouse \cite{RW96} and Vogtmann \cite{VTN90}. 

We prove the following proposition which relates $\T\grass(2,2n)$ to the simplicial complex $\mathbf{T}_{2n}$. 

\begin{proposition}\label{pro:coincidence}
The tropical symplectic Grassmannian $\T\grass(2,2n)$ is isomorphic as a fan to $\RR^n\times H_{n-2} \times \mathbf{T}_{2n}$ where $H_{n-2}$ denotes a generic hyperplane in $\TT\PP^{n-1}$.
\end{proposition}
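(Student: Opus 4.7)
The plan is to combine \Cref{thm:rank2} with the Speyer--Sturmfels description of $\T\Gr(2,2n)$ as the space of phylogenetic trees. By \Cref{thm:rank2}, $\T\grass(2,2n) = \T\Gr(2,2n) \cap \sigma$, where $\sigma \subseteq \TT\PP^{\binom{2n}{2}-1}$ is the tropical hyperplane cut out by the unique symplectic relation $\bigoplus_{i \in [n]} \mu_{i,\bar i}$. The Speyer--Sturmfels theorem identifies $\T\Gr(2,2n)$ with $\mathbf{T}_{2n}$ up to a lineality space of dimension $2n-1$; on each maximal cone (indexed by a trivalent tree $T$) one can write $\mu_{ij} = \alpha_i + \alpha_j + \nu^T_{ij}$, where $\alpha \in \RR^{2n}$ parametrizes the lineality (leaf-edge weights) and $\nu^T_{ij}$ is the signed contribution of the internal edges on the path from leaf $i$ to leaf $j$ in $T$.

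First I would decompose the lineality $\RR^{2n}$ as $L_1 \oplus L_2$, with $L_1 = \{\alpha : \alpha_i = \alpha_{\bar i} \text{ for all } i \in [n]\}$ and $L_2 = \{\alpha : \alpha_i = -\alpha_{\bar i}\}$. A direct check shows that every element of $L_2$ leaves each coordinate $\mu_{j,\bar j}$ unchanged, so $L_2 \cong \RR^n$ lies inside $\sigma$ (and hence inside $\T\grass(2,2n)$) as a lineality subspace; this produces the $\RR^n$ factor. Next, on each maximal cone of $\T\Gr(2,2n)$, I would perform the piecewise-linear change of coordinates replacing the $L_1$-parameters $\beta_i := \alpha_i + \alpha_{\bar i}$ by $B_i := \mu_{i,\bar i} = \beta_i + \nu^T_{i,\bar i}$. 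Since $\nu^T_{i,\bar i}$ is integer-linear in the edge weights of $T$, this is a unimodular shear fixing the tree coordinates, and under it the symplectic condition on $\mu$ becomes exactly $B \in H^{\mathrm{aff}}_{n-2} \subseteq \RR^n$. Quotienting by the overall projective diagonal, which lies in $L_1$ and becomes the lineality of the affine hyperplane, yields the claimed fan isomorphism $\T\grass(2,2n) \cong \RR^n \times H_{n-2} \times \mathbf{T}_{2n}$; the dimensions $n + (n-2) + (2n-3) = 4n - 5$ agree with the projective dimension of $\T\grass(2,2n)$.

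The main technical point to nail down is that this piecewise-linear reparametrization glues globally into an isomorphism of polyhedral fans: that is, the cone structure on $\T\grass(2,2n) = \T\Gr(2,2n) \cap \sigma$ refines exactly as a product of cones of $H_{n-2}$ with cones of $\mathbf{T}_{2n}$. Since the shear on each maximal cone depends only on the combinatorial type $T$ through the integer vector $(\nu^T_{i,\bar i})_{i \in [n]}$, and the target condition $B \in H_{n-2}$ is independent of $T$, compatibility across adjacent cones of $\mathbf{T}_{2n}$ reduces to compatibility of the tree-space fan itself, which is part of the Speyer--Sturmfels theorem. The only genuine subtlety is verifying that no maximal cone of the intersection is lost because $\sigma$ happens to cut it in a lower-dimensional face; this follows from the fact that, in the $(B,T)$-coordinates, the defining inequality for $\sigma$ involves only the free coordinates $B$ and therefore meets every maximal cone of $\T\Gr(2,2n)$ in a cone of full relative dimension.
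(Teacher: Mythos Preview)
Your approach is correct and follows the same overall strategy as the paper: use \Cref{thm:rank2} to write $\T\grass(2,2n)=\T\Gr(2,2n)\cap Y$ for the tropical hyperplane $Y$, invoke the Speyer--Sturmfels product decomposition $\T\Gr(2,2n)\cong L\times\mathbf{T}_{2n}$ with $L$ the $(2n-1)$-dimensional lineality, identify $L\cap Y\cong \RR^n\times H_{n-2}$ via the splitting into the $\pm$-eigenspaces of the bar involution, and conclude the product structure.

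The implementation of the key step differs. You work cone by cone over $\mathbf{T}_{2n}$, performing on each tree cone the shear $\beta_i\mapsto B_i=\beta_i+\nu^T_{i,\bar i}$ and then checking that these shears agree on adjacent cones. The paper avoids the cone-by-cone analysis entirely with a one-line translation argument: for \emph{any} $\mu$ there is $\mu'\in\mu+L$ with all $\mu'_{i,\bar i}$ equal (since the map $L\to\RR^n$, $l\mapsto(l_{i,\bar i})_i$, is onto), and at such a $\mu'$ the minimum in the symplectic relation is attained at every term; hence $(\mu+L)\cap Y=\mu'+(L\cap Y)$, so $Y$ meets each $L$-coset in a single translate of $L\cap Y$. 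This immediately yields $\T\grass(2,2n)\cong(L\cap Y)\times\mathbf{T}_{2n}$ without any gluing verification. Your shear is, in effect, the cone-dependent choice of this translate $\mu'$; the paper's formulation makes it transparent that no compatibility issue can arise, whereas your argument has to (and does) address it. Both routes are valid, but the paper's is shorter and sidesteps the ``main technical point'' you flag.
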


\begin{proof}
Let $Y$ denote the tropical hyperplane given by $\mu_{1,\ov{1}}\oplus\dots\oplus \mu_{n,\ov{n}}$, which is the tropicalization of the linear relation  \eqref{eq:rk2}. Recall that the lineality space $L$ of $\T\Gr(2,2n)$ is the largest linear space contained in every cone of $\T\Gr(2,2n)$. It is the $(2n-1)$-dimensional vector space spanned by the vectors: 
\[u_i = \sum\limits_{B\ni i} e_B\]
for every $i\in [2n]$ (the span of this vectors in $\RR^{\binom{2n}{2}}$ contains the line $\RR (1,\dots,1)$ which is collapsed in the tropical projective plane). 

\Cref{thm:rank2} tells us that $\T\grass(2,2n)$ is the intersection of $\T\Gr(2,2n)$ with the tropical hyperplane $Y$. The intersection $Y\cap L$ is a fan with a $n$-dimensional lineality space $L'$ spanned by the vectors:
\[u_i-u_{\overline{i}} \] %\quad \text{and} \quad \sum\limits_{B\in \binom{2n}{2}} e_B. 
Every other cone of $L\cap Y$ is of the form $L'+\cone\{u_i \mid i\in I\}$ for any subset $I\subseteq[n]$ of size at most $n-2$. So $L\cap Y$  is isomorphic as a fan to $\RR^n\times H_{n-2}$. 

For any point $\mu\in \TT\PP^{{\binom{2n}{2}}-1}$, it is straightforward to see that there is a point $\mu'\in \mu+L$ where the minimum in \Cref{eq:rk2} is achieved at every term. From this we can deduce that $(\mu+L)\cap Y = \mu'+(L\cap Y)$. 
One way to interpret the last equation is that $Y$ intersects $\T\Gr(2,2n)$ through its lineality space.
As a fan, $\T\Gr(2,2n)$ is isomorphic to $L\times \mathbf{T}_{2n}$ \cite[Theorem 3.4]{SS04}. So we conclude that 
\[\T\grass(2,2n) \cong (L\cap Y) \times \mathbf{T}_{2n}\cong \RR^n\times H_{n-2} \times \mathbf{T}_{2n}.\]
\end{proof}

We can now extract enumerative information of $\T\grass(2,2n)$:

\begin{corollary}\label{cor:rays}
The fan $\T\grass(2,2n)$ has $2^{2n-1}-n-1$ rays and $\binom{n}{2}[(4n-5)!!]$ facets.
\end{corollary}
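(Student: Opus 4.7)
The plan is to read off both counts directly from the product decomposition in \Cref{pro:coincidence}, namely
\[
\T\grass(2,2n) \;\cong\; \RR^n \times H_{n-2} \times \mathbf{T}_{2n}.
\]
Since $\RR^n$ is the lineality of the product, it contributes no rays and does not affect the facet count; every ray and every maximal cone of $\T\grass(2,2n)$ modulo the lineality comes from exactly one of the two remaining factors. Concretely, the rays of a product of fans (modulo lineality) are the disjoint union of the rays of the factors, and the maximal cones are Cartesian products of maximal cones of the factors. So I just need to enumerate rays and facets of $H_{n-2}$ and of $\mathbf{T}_{2n}$, then combine.

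For the generic tropical hyperplane $H_{n-2}\subseteq \TT\PP^{n-1}$, the cone structure is standard: writing the defining tropical form as $\min(y_1,\dots,y_n)$, the maximal cones are parametrized by the pair of coordinates tied at the minimum, so $H_{n-2}$ has $\binom{n}{2}$ facets, and its rays correspond to the $n$ single-coordinate directions where one coordinate strictly minimizes. For the phylogenetic tree space $\mathbf{T}_{2n}$, I will invoke the classical combinatorics of Buneman/Robinson–Whitehouse: rays correspond to non-trivial splits of $[2n]$ into two blocks both of size at least $2$, and facets correspond to trivalent labeled trees with $2n$ leaves. The number of such splits is $2^{2n-1}-2n-1$ (all $2^{2n-1}-1$ unordered bipartitions minus the $2n$ trivial ones coming from singletons), and the number of such trees is the double factorial $(4n-5)!!$.

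Combining, the ray count is
\[
n \;+\; \bigl(2^{2n-1}-2n-1\bigr) \;=\; 2^{2n-1}-n-1,
\]
and the facet count is
\[
\binom{n}{2}\cdot (4n-5)!!,
\]
as claimed. I do not expect any real obstacle here, because the hard part (the product decomposition) is already established in \Cref{pro:coincidence}, and both factor counts are classical. The only minor care needed is to verify that no lineality direction of one factor secretly fills in a ray from the other factor, but since the lineality $\RR^n$ of the whole fan is entirely carried by the explicit $\RR^n$ summand (and the hyperplane and tree space contain no additional linealities), this is automatic.
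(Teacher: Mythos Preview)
Your proposal is correct and follows essentially the same approach as the paper: both use the product decomposition from \Cref{pro:coincidence}, count rays and facets of $H_{n-2}$ and $\mathbf{T}_{2n}$ separately (citing the classical results for the tree space), and combine additively for rays and multiplicatively for facets. Your write-up simply spells out a bit more explicitly why rays and facets of a product of fans behave this way.
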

\begin{proof}
The number of rays of $H_{n-2}$ is $n$ and the number of rays in $\mathbf{T}_{2n}$ is $2^{2n-1}-2n-1$ \cite{BHV01}, so the number of rays of $\T\grass(2,2n)$ is $n+ 2^{2n-1}-2n-1 = 2^{2n-1}-n-1$.

The number of facets of $H_{n-2}$  is $\binom{n}{2}$ and the number of facets in $\mathbf{T}_{2n}$ is $(4n-5)!! = 1\times3\times5\times\cdots\times(4n-5)$ \cite{RW96, VTN90}, so the number of facets of $\T\grass(2,2n)$ is $\binom{n}{2}[(4n-5)!!]$.
\end{proof}
 
One can turn any simplicial fan into a simplicial complex by quotienting out the lineality space and intersecting with the sphere of radius 1. We say that the homology of the simplicial fan is the simplicial homology of this complex. The following corollary specifies some topological information of $\T\grass(2,2n)$; its homotopy type in particular:

\begin{corollary}\label{cor:betti}
The Betti numbers for the simplicial complex corresponding to the fan $\T\grass(2,2n)$ are as follows: for $n\ge 4$ the only positive Betti numbers are $b_0 =1$, $b_{n-3} = n-1$,  $b_{2n-4} =
(2n-2)!$ and $b_{3n-7} = (n-1)(2n-2)!$; for $n=3$ they are $b_0 =3$ and $b_2 = 72$ and for $n=2$ we only have $b_0 =3$.
\end{corollary}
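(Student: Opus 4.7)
The plan is to exploit the fan decomposition from Proposition \ref{pro:coincidence} and then compute Betti numbers via the K\"unneth formula, invoking the known homotopy types of the two nontrivial factors.

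By Proposition \ref{pro:coincidence}, $\T\grass(2,2n) \cong \RR^n \times H_{n-2} \times \mathbf{T}_{2n}$ as a fan, with the $\RR^n$ factor the lineality. Passing to the associated simplicial complex collapses the lineality factor, so the resulting complex is the product of those of $H_{n-2}$ and $\mathbf{T}_{2n}$ and the Betti numbers multiply via K\"unneth. For the tree-space factor, Robinson--Whitehouse \cite{RW96} and Vogtmann \cite{VTN90} establish that $\mathbf{T}_m \simeq \bigvee^{(m-2)!} S^{m-4}$. Specializing $m=2n$, we get $\tilde H_{2n-4}(\mathbf{T}_{2n}) = \mathbb{Z}^{(2n-2)!}$ with all other reduced Bettis trivial for $n \geq 3$, while $\mathbf{T}_4$ is three points.

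For the tropical hyperplane factor, I would identify $H_{n-2}$ combinatorially: its $n$ rays are indexed by the $(n-1)$-subsets of $[n]$ (where a point lies on such a ray when the minimum of its coordinates is attained exactly on the $(n-1)$-subset), and its $\binom{n}{2}$ facets by the $2$-subsets, with the facet indexed by $\{i,j\}$ spanned by exactly the $n-2$ rays whose indexing subset contains $\{i,j\}$. Relabelling each ray by the complement of its indexing subset in $[n]$, this identifies the simplicial complex of $H_{n-2}$ with the $(n-3)$-skeleton of the $(n-1)$-simplex on vertex set $[n]$. A standard homology computation then gives $H_{n-2} \simeq \bigvee^{n-1} S^{n-3}$ for $n \geq 4$, with reduced homology free of rank $\binom{n-1}{n-2}=n-1$ concentrated in degree $n-3$. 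For small $n$ the complex degenerates: three disjoint points when $n=3$, and a single point when $n=2$.

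Assembling via K\"unneth: for $n\geq 4$ both factors are path-connected with one extra nontrivial reduced Betti number each, so the only nonzero Bettis of the product are $b_0=1$, $b_{n-3}=n-1$, $b_{2n-4}=(2n-2)!$, and $b_{3n-7}=(n-1)(2n-2)!$ (the last index arising as $(n-3)+(2n-4)=3n-7$). For $n=3$, the factor $H_1$ contributes $H_0=\mathbb{Z}^3$, so K\"unneth with $\mathbf{T}_6 \simeq \bigvee^{24} S^2$ yields $b_0=3$ and $b_2=3\cdot 24=72$. For $n=2$ only $\mathbf{T}_4$ contributes, giving $b_0=3$. The only step in this plan requiring real work is the combinatorial identification of $H_{n-2}$ with a skeleton of a simplex; once that is established, the rest is routine application of the K\"unneth formula and cited wedge-of-spheres results.
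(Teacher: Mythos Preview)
Your proposal is correct and follows essentially the same approach as the paper: invoke the fan decomposition of Proposition~\ref{pro:coincidence}, use the Robinson--Whitehouse/Vogtmann homotopy type for $\mathbf{T}_{2n}$ and the wedge-of-spheres description for $H_{n-2}$, and assemble with K\"unneth. The paper simply asserts that $H_{n-2}$ is a bouquet of $n-1$ spheres of dimension $n-3$, whereas you supply the extra combinatorial step identifying its simplicial complex with the $(n-3)$-skeleton of the $(n-1)$-simplex; this is a welcome addition, not a departure.
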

\begin{proof}
The simplicial complex $\mathbf{T}_{2n}$ has the homotopy type of a bouquet of $(2n-2)!$ spheres of dimension $2n-4$ \cite{RW96, VTN90} while the complex corresponding to $H_{n-2}$ has the homotopy type of a bouquet of $n-1$ spheres of dimension $n-3$. Now we use K\"unneth formula for product of spaces \cite{Kun23} to compute the Betti numbers of the simplicial complex of $\T\grass(2,2n)$ as stated in the corollary.
\end{proof}

\section{Direct Sums}\label{sec:directsums}
Let $\mu_1\in \Dr(k_1,n_1)$ and $\mu_2 \in \Dr(k_2, n_2)$ be two valuated matroids. The \emph{direct sum} between them  $\mu_1\oplus \mu_2 \in \Dr(k_1+k_2,n_1+n_2)$ is defined as
\[
(\mu_1\oplus \mu_2)_{\Jm_1\sqcup \Jm_2} := {\mu_1}_{\Jm_1}+{\mu_2}_{\Jm_2}
\]
where $\Jm_1\subset [n_1]$ and $\Jm_2\subset [n_2]$. Notice that this implies that $(\mu_1\oplus \mu_2)_{\Jm} = \infty$ if $|\Jm\cap n_1| \ne k_1$.

The main motivation behind this construction is that if $\mu_1$ and $\mu_2$ are representable valuated matroids, represented by $L_1$ and $L_2$, then $\mu_1\oplus \mu_2$ is representable and represented by $L_1\times L_2$. It is straightforward to see that 
\begin{equation}
L_{\mu_1\oplus \mu_2} = L_{\mu_1}\times L_{\mu_2}
\label{eq:bergman_sum}
\end{equation}
and that if $\mu_1= \pi(A_1)$ and $\mu_2 = \pi(A_2)$ then 
\begin{equation}
\mu_1\oplus \mu_2 = \begin{pmatrix}
\pi(A_1) & \infty \\
\infty & \pi(A_2)	
\end{pmatrix}.
\label{eq:stiefel_sum}
\end{equation}
Moreover, the converse is also true: every presentation of $\mu_1\oplus \mu_2$ is of this form. This follows from \Cref{eq:bergman_sum} and \cite[Theorem 6.6]{FO19}.

From the observations above we can prove the following statements which help us to extend the counter-examples from \Cref{sec:matroidzoo} to counter-examples for all suitable $k$ and $n$.
\begin{lemma}
\label{lemma:sum}
Let $X$ be any of the five sets considered in \Cref{thm:main} for all $k$ and $n$ and let $\mu$ be a valuated matroid. Then the following are equivalent:
\begin{enumerate}
	\item $\mu\in X$,
	\item $\mu\oplus U_{0,2}\in X$,
	\item $\mu\oplus U_{1,2} \in X$,
\end{enumerate}
where $U_{0,2} = (0) \in \T\Gr(0,2)$ and $U_{1,2} = (0,0) \in \T\Gr(1,2)$ are the uniform matroids.
\end{lemma}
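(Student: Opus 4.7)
The plan is to verify the equivalence separately for each of the five sets, exploiting in each case how the direct sum interacts with the relevant algebraic and tropical data. For (2), I would unpack the tropical symplectic relation for $\mu \oplus U_{j,2}$ at each $S \in \binom{[2n+2]}{k+j-2}$. When $S \subseteq [2n]$, only indices $i \in [n]$ can contribute finite terms, since $Si\bar{i}$ must lie in $[2n]$ for $\mu \oplus U_{j,2}$ to be finite there, and the condition reduces to the original symplectic relation for $\mu$. When $S$ meets $\{n+1, \overline{n+1}\}$, either every term is infinite (and the condition is trivially met) or the surviving terms collapse to a symplectic relation for $\mu$ indexed by $S$ minus the new element. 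For (3), \Cref{eq:bergman_sum} gives $L_{\mu \oplus U_{j,2}} = L_\mu \times L_{U_{j,2}}$, where a short check shows $L_{U_{0,2}} = \{(\infty,\infty)\}$ and $L_{U_{1,2}}$ is the tropical diagonal in $\TT^2$. Splitting the orthogonality sum of two product points into a $[2n]$-part and a $\{n+1,\overline{n+1}\}$-part, the latter is uniformly $\infty$ for $U_{0,2}$, while for $U_{1,2}$ the two extra terms are always equal and thus cannot be a unique minimum. Either way, pairwise orthogonality in $L_{\mu\oplus U_{j,2}}$ reduces to pairwise orthogonality in $L_\mu$.

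For (4) and (5), I would invoke \cite[Theorem 6.6]{FO19}: every Stiefel presentation of $\mu \oplus U_{j,2}$ is, up to row permutation, assembled from a presentation of $\mu$ and one of $U_{j,2}$ in the block pattern of \Cref{eq:stiefel_sum}, with $\infty$ entries in the off-diagonal blocks. These $\infty$ entries force every cross-term in the row-orthogonality sum of a ``$\mu$-row'' with a ``$U_{j,2}$-row'' to be $\infty$, and likewise force the off-diagonal entries of $A' \odot (B')^T$ to be $\infty$. Thus both row-orthogonality and the symmetry of $A' \odot (B')^T$ decouple into the corresponding conditions on $(A|B)$ and on the small $U_{j,2}$-block, with the $U_{j,2}$-block being trivially self-orthogonal and symmetric in both cases $j \in \{0,1\}$.

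For (1), if $\mu$ is realized over $\KK$ (of characteristic $p$) by an isotropic subspace $L_1 \subseteq \KK^{2n}$ and $L_2 \subseteq \KK^2$ is any realization of $U_{j,2}$, then $L_1 \times L_2 \subseteq \KK^{2n+2}$ is isotropic for the block-sum symplectic form $\omega_1 + \omega_2$; here $L_2$ is automatically isotropic because any subspace of $\KK^2$ of dimension at most $1$ is isotropic, and both uniform matroids $U_{0,2}, U_{1,2}$ are realizable over any field. Conversely, the vanishing pattern of the Pl\"ucker coordinates of any realization of $\mu \oplus U_{j,2}$ forces the column supports to split, so any such realization is of the form $L_1 \times L_2$ for realizations of the two summands, and isotropy of the product forces isotropy of $L_1$. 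I expect the main obstacle to be precisely this converse step in (1): one must invoke the standard fact that classical realizations of matroid direct sums themselves split as direct products (a consequence of the vanishing of all Pl\"ucker coordinates with the wrong pattern on the new pair), and then check that this splitting is compatible with the fixed symplectic form $\omega$ on $\KK^{2n+2}$. Once this is in hand, the rest is routine bookkeeping.
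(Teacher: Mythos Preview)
Your proposal is correct and follows essentially the same case-by-case approach as the paper's proof, treating each of the five sets separately and invoking the same key ingredients (\Cref{eq:bergman_sum}, \Cref{eq:stiefel_sum}, and the block structure of presentations from \cite{FO19}). If anything, your handling of the converse in case~(1) is more explicit than the paper's: the paper only exhibits a specific product realization $L\times\{(x,x)\}$ for $U_{1,2}$ without arguing that an arbitrary isotropic realization of $\mu\oplus U_{1,2}$ must split, whereas you correctly flag this as the one nontrivial step and identify the splitting-of-realizations fact needed.
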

\begin{proof}
We are going to examine each set that we are considering one by one:

\begin{itemize}
	\item \textbf{The tropical symplectic Grassmannian:} If $\mu\in \T\grass(k,2n)$ then there is an isotropic linear space $L\subset \KK^{2n}$ such that $\mu=\val(L)$. Then the space $L' = L\times \{(0,0)\} \subset \KK^{2n+2}$ is obviously isotropic (when pairing the last two coordinates) and represents $L\oplus U_{0,2}$. Any representation of $L\oplus U_{0,2}$ is of this form so the other direction holds. 
	
	Similarly, $L'' = L\times \{(x,x) \mid x\in \KK\}$ is a $(k+1)$-dimensional vector space representing $\mu\oplus U_{1,2}$ which is isotropic if and only if $L$ is isotropic. 
	
	\item \textbf{The symplectic Dressian:} Since $(\mu\oplus U_{0,2})_{\Jm} =\infty$ if $\Jm\cap\{2n+1,2n+2\} \ne \emptyset$, the only non-trivial symplectic relations occur when $S$ in \Cref{eq:tropsymp} does not contain $2n+1$ or $2n+2$ and they are equivalent to the symplectic relations on $\mu$. 
	
	For $\mu\oplus U_{1,2}$, notice that $(\mu\oplus U_{1,2})_{\Jm} =\infty$ unless $|\Jm\cap\{2n+1,2n+2\}|=1$. So again we have that the only non-trivial symplectic relation occurs when $|S\cap\{2n+1,2n+2\}| = 1$ and they are again equivalent to those for $\mu$ (since $(\mu\oplus U_{1,2})_{\Jm} = \mu_{\Jm\setminus k}$ for $k\in \{2n+1,2n+2\}$.
	\item \textbf{The isotropic region:} By \Cref{eq:bergman_sum}, $L_{\mu\oplus U_{0,2}} = L_{\mu}\times\{(\infty,\infty)\}$ and $L_{\mu\oplus U_{1,2}} = L_{\mu}\times \{(x,x) \mid x\in \TT \}$, which are clearly isotropic if and only if $L_{\mu}$ is isotropic.
	
	\item \textbf{Row-orthogonal Stiefel image:} Suppose $\mu$ is a transversal valuated matroid. We have that $\mu=\pi(A)$ for a tropical matrix $A\in \KK^{k\times 2n}$ if and only if the matrix $A'$ which is  obtained by adding two columns to $A$ with all entries equal to $\infty$ satisfies $\pi(A') = \mu\oplus U_{0,2}$. Moreover, all presentations of  $\mu\oplus U_{0,2}$ are of this form. 
	
	Similarly, consider the matrix $A''$ that consists of adding a row to $A'$ whose first $2n$ entries are equal to $\infty$ and the last two are equal to $0$. We have that $\pi(A) = \mu \Leftrightarrow \pi(A'') = \mu\oplus U_{1,2}$ and all presentations are of this form. 
	
	Now it is clear that the rows of $A$ are orthogonal if and only if the rows of $A'$ are orthogonal and if and only if the rows of $A''$ are orthogonal. 
	
	\item \textbf{Symmetric Stiefel image:} Let us write a presentation of $\mu$ as $(A| B)$ and the corresponding presentations of $\mu\oplus U_{0,2}$ and $\mu \oplus U_{1,2}$ as $(A'|B')$ and $(A''| B'')$, so for example 
	\[
	A' = \begin{pmatrix}
		A & \infty
	\end{pmatrix} \in \TT^{k\times (n+1)}
	\]
and
	\[
	A'' = \begin{pmatrix}
		A & \infty\\
		\infty & 0
	\end{pmatrix} \in \TT^{(k+1)\times (n+1)}.
	\]
Then $A' \odot B'^T = A\odot B$ and
\[
A''\odot B''^T = \begin{pmatrix}
		A\odot B^T  & \infty\\
		\infty & 0
	\end{pmatrix} \in \TT^{(k+1)\times (k+1)},
\]
from which the desired statement follows. 
\end{itemize}
\end{proof}

\section{The Matroid Zoo}\label{sec:matroidzoo}
Welcome to the matroid zoo, a section devoted to show examples that exhibit different pathologies.
Examples \ref{exa:zoo-one} through \ref{ex:G} are used in the proof of \Cref{thm:main}. Additional highlights of the zoo include:
\begin{itemize}
    \item \Cref{exa:zoo-one} shows that the tropical symplectic Grassmannian depends on the characteristic.
    \item \Cref{ex:4points} shows that neither being in the tropical symplectic Grassmannian nor in the Dressian is closed under subspaces (as opposed to the classical symplectic Grassmannian).
    \item  \Cref{exa:nonrepresentableflag} shows a non realizable flag with symplectically realizable components.
    \item  \Cref{exa:zoo-nine} shows that (non-realizable) symplectic matroids do not always satisfy the tropical symplectic relations.
\end{itemize}

\subsection{In the Symplectic Dressian but not in the Tropical Symplectic Grassmannian}
\begin{examplemat}\label{exa:zoo-one}
Let $M_{K_4}$ be the matroid whose affine representation is depicted in \Cref{fig:k4}. 
\begin{figure}[h]
	\centering
		\includegraphics[width= 0.25\textwidth]{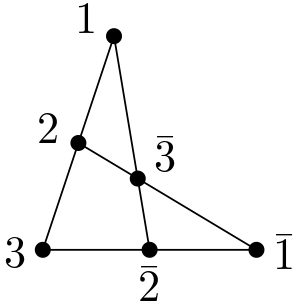}
	\caption{The matroid $M_{K_4}$}
	\label{fig:k4}
\end{figure}
We call it $M_{K_4}$ because it is the graphical matroid that corresponds to the complete graph $K_4$ with edges $i$ and $\overline{i}$ opposite to each other. It is easy to see that this matroid satisfies all the symplectic relations, since all of its non-bases are admissible sets. However, notice that the polynomial
\[\X_{123}\X_{\bar{3}\bar{2}\bar{1}}-\X_{12\bar{3}}\X_{3\bar{2}\bar{1}}-\X_{13\bar{2}}\X_{2\bar{3}\bar{1}}+\X_{1\bar{3}\bar{2}}\X_{23\bar{1}} + 2\X_{12\bar{2}}\X_{3\bar{3}\bar{1}}
\]
is in the ideal $S_{3,6}(\KK)$. However, unless the characteristic is 2, the tropicalization of the polynomial attains the minimum only in $x_{12\bar{2}}\odot x_{3\bar{3}\bar{1}}$,
 so $M_{K_4} \notin \T\grass_p(3,6)$ for $p\ne 2$. However, $M_{K_4} \in \T\grass_p(3,6)$, so already with 6 elements the tropical symplectic Grassmannian depends on the characteristic. This is remarkable, since for type ${\tt A}$ the smallest non realizable matroid has 7 elements (the Fano and non-Fano matroids) and for type ${\tt D}$ it is known that the smallest non-realizable tropical Wick vector has at least 12 elements and at most 14 elements.
\end{examplemat}

We checked computationally that for $\grass_2(3,6)$, the Pl\"ucker relations, the symplectic relations and the relation 
\[
\X_{123}\X_{\bar{3}\bar{2}\bar{1}}-\X_{12\bar{3}}\X_{3\bar{2}\bar{1}}-\X_{13\bar{2}}\X_{2\bar{3}\bar{1}}+\X_{1\bar{3}\bar{2}}\X_{23\bar{1}}
\]
form a tropical basis. The relation above is interesting as it consists of only admissible sets. Perhaps this can shine a light into the relation to symplectic matroids (see \Cref{subsec:sympmat}).
\subsection{Isotropic but not in the Symplectic Dressian.}
\begin{examplemat}
\label{ex:4points}
Consider the matroid $M$ with $2n= 6$ elements and rank $2$ with parallelism classes given by $1\bar{1}$, $2\bar{2}$, $3$ and $\bar{3}$ (see \Cref{fig:4points}). This matroid clearly does not satisfy the only symplectic relation $x_{1\bar{1}}\oplus x_{2\bar{2}}\oplus x_{3\bar{3}}$. However, the Bergman fan of $L_M$ consists of four rays: $\cone(e_{1\bar{1}})$, $\cone(e_{2\bar{2}})$, $\cone(e_3)$ and $\cone(e_{\bar{3}})$. It is easy to see that any two points on these 4 rays are orthogonal to each other, since they all have at least 4 coordinates achieving the minimum, so this space is isotropic.

\begin{figure}[h]
	\centering
		\includegraphics[width= 0.38\textwidth]{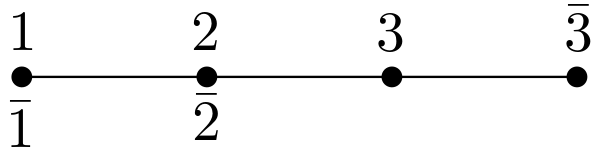}
	\caption{The matroid in \Cref{ex:4points}}
	\label{fig:4points}
\end{figure}

Moreover, notice that $L_M$ is a subspace of the Bergman fan of the uniform matroid $U_{3,6}$, which does not lie in the tropical symplectic Grassmannian. So this example also shows that being a subspace of a tropical linear space in the tropical symplectic Grassmannian (or symplectic Dressian) does not guarantee being in tropical symplectic Grassmannian (respectively symplectic Dressian).
This is somewhat anti-intuitive, since any subspace of a linear space in the classical symplectic Grassmannian is obviously also in the symplectic Grassmannian (of the corresponding rank). 

One could think this would extend over to the tropical case, by taking any isotropic linear space $H$ that realizes $U_{3,6}$ and intersecting it with the pre-image under $\val$ of $L_M$. However, any linear subspace of rank two of $H$ that has points mapped to $\cone(e_{1\bar{1}})$ and $\cone(e_{2\bar{2}})$ via $\val$, must also have points mapped to $\cone(e_{3\bar{3}})$ (this follows from $H$ being isotropic). So the tropicalization of any such subspace would have to be $L_{U_{1,2}\oplus U_{1,2} \oplus U_{1,2}}$. 
\end{examplemat}

The previous example shows that if $k<n$, then there are tropical isotropic linear spaces that are not in the symplectic Dressian.
However, for $k=n\le 3$ a tropical linear space is isotropic if and only if it is in the Dressian. 
The remaining cases are $k=n\ge4$. 
The following are the tropical isotropic relations that are not tropical symplectic relations for $k=n=4$:
\begin{enumerate}
	\item $x_{12\bar{1}\bar{2}}\oplus x_{34\bar{3}\bar{4}}$,
	\item $x_{13\bar{1}\bar{3}}\oplus x_{24\bar{2}\bar{4}}$,
	\item $x_{14\bar{1}\bar{4}}\oplus x_{23\bar{2}\bar{3}}$,
\end{enumerate}
while the following are the  tropical symplectic relations which are not isotropic:
\begin{enumerate}
	\item $x_{12\bar{1}\bar{2}}\oplus x_{13\bar{1}\bar{3}}\oplus x_{14\bar{1}\bar{4}}$,
	\item $x_{12\bar{1}\bar{2}}\oplus x_{23\bar{2}\bar{3}}\oplus x_{24\bar{2}\bar{4}}$,
	\item $x_{13\bar{1}\bar{3}}\oplus x_{23\bar{2}\bar{3}}\oplus x_{34\bar{3}\bar{4}}$,
	\item $x_{14\bar{1}\bar{4}}\oplus x_{24\bar{2}\bar{4}}\oplus x_{34\bar{3}\bar{4}}$.
\end{enumerate}
\begin{examplemat}
\label{ex:cube}
Consider the matrix
\[
A = \begin{pmatrix}
1&1&1&1&1&1&1&1\\
0&1&1&0&0&1&1&0\\
0&0&2&2&0&0&2&2\\
0&0&0&0&1&1&1&1	
\end{pmatrix},
\]
and let $M$ be the matroid given by the columns of $A$ as vectors (see \Cref{fig:trapezoidal_prism}). We have that $13\bar{1}\bar{3}$ and $24\bar{2}\bar{4}$ are bases while $12\bar{1}\bar{2}, 23\bar{2}\bar{3}, 34\bar{3}\bar{4}$ and $14\bar{1}\bar{4}$ are not. So $M$ does not satisfy any of the four symplectic relations listed above, but it does satisfy all isotropic relations, (to see that it satisfies the relations that are both isotropic and symplectic, notice all sets of the form $ijk\overline{i}$ are bases). 
\begin{figure}[h]
	\centering
		\includegraphics[width= 0.34\textwidth]{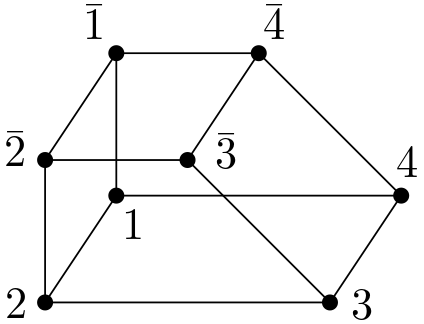}
	\caption{Affine representation of \Cref{ex:cube}.}
	\label{fig:trapezoidal_prism}
\end{figure}
 
\end{examplemat}

\subsection{With Row-orthogonal Presentation but without Symmetric Presentation}
\begin{examplemat}\label{ex:D}
Consider the the matrix $(A|B) = \left(\begin{smallmatrix}
0&0&1&1\\
0&0&0&0
\end{smallmatrix}\right)$ and the valuated matroid $\mu = \pi(A|B)$. The rows of $(A|B)$ are orthogonal, however, $A\odot B^T = \left(\begin{smallmatrix}
1&0\\
1&0
\end{smallmatrix}\right)$ is not symmetric. Moreover, every presentation $(A'|B')$ of $\mu$ is the result of replacing one of the 1's in $(A|B)$ by some $x\ge 1$ and replacing either $A_{2,1}$ or $A_{2,2}$ by some $y\ge 0$ \cite[Example 3.3]{FO19}. So in any case $A'\odot B'^T = \left(\begin{smallmatrix}
1&0\\
1&0
\end{smallmatrix}\right)$.
\end{examplemat}

\subsection{With Symmetric Presentation but not Isotropic}

\begin{examplemat}\label{ex:E}
Consider the the matrix $(A|B) = \left(\begin{smallmatrix}
0&1&0&1\\
0&0&0&0
\end{smallmatrix}\right)$ and the valuated matroid $\mu = \pi(A|B)$. We have that $A\odot B^T = \left(\begin{smallmatrix}
0&0\\
0&0
\end{smallmatrix}\right)$ is symmetric. However, $\mu_{1\overline{1}} = 0$ and $\mu_{2\overline{2}} = 1$ so it does not satisfy the isotropic relations from \Cref{prop:isorincon}. 

\end{examplemat}

\subsection{In the Symplectic Dressian but without Row-orthogonal Presentation}
	
Notice that by \Cref{prop:dressiso}, the smallest example of a transversal valuated matroid in the symplectic Dressian that does not have a presentation with orthogonal rows must have at least 8 elements.
To completely deal with all other cases, we need an example of rank 3 and another of rank 4. 

\begin{examplemat}
\label{ex:2lines}
Consider the rank 3 matroid $M$ whose affine representation is given by \Cref{fig:2lines}. We have that $M$ is transversal (by \cite[Theorem 4.6]{BD72}, for example) and, by \cite[Proposition 4.3]{FO19}, any valuated presentation $A\in\pi^{-1}(M)$ of $M$ must have a row equal to $R_1 = (0,\infty,\infty,0,0,\infty,\infty,0)$ (modulo tropical scalar multiplication, i.e., adding a constant) and another row equal to $R_2 = (0,0,0,\infty,\infty,0,0,\infty)$, which are not orthogonal to each other. 
\begin{figure}[h]
	\centering
		\includegraphics[width= 0.38\textwidth]{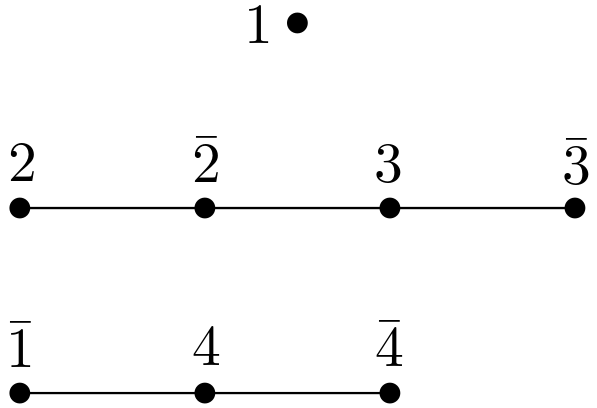}
	\caption{The matroid in \Cref{ex:2lines}}
	\label{fig:2lines}
\end{figure}
\end{examplemat}

\begin{examplemat}\label{ex:G}
Now consider the rank 4 matroid $M$, whose affine representation is almost as \Cref{fig:2lines}, but with $23\bar{2}\bar{3}$ being co-planar instead of co-linear. In other words, $M$ is the matroid whose only cyclic flats are $23\bar{2}\bar{3}$ of rank 3 and $4\bar{1}\bar{4}$ of rank two. Again, this matroid is transversal and, by \cite[Proposition 4.3]{FO19}, any valuated presentation $A\in\pi^{-1}(M)$ of $M$ must have a row equal to $R_1$. However, now there are two rows which should have $\infty$ entries at $4,\bar{1}$ and $\bar{4}$, but are not necessarily equal to $R_2$. One of them could be equal to $(\infty,0,0,\infty,\infty,0,0,\infty)$ and be orthogonal with $R_1$, but not both at the same time (other wise it forces $1$ to be in the $4\bar{1}\bar{4}$ line). So there is always a row which is not orthogonal to $R_1$.
\end{examplemat}

\subsection{Non-realizable Flag with Symplectically Realizable Components}
\begin{examplemat}\label{exa:nonrepresentableflag}
Consider the non-Pappus matroid $P$ depicted in \Cref{fig:nonpappus}. 

\begin{figure}[h]
	\centering
		\includegraphics[width= 0.34\textwidth]{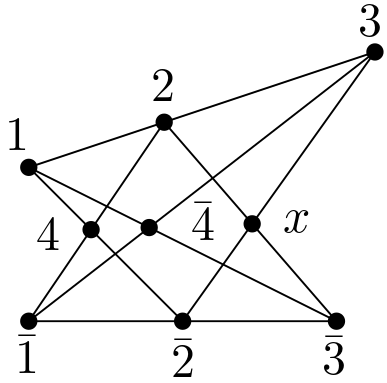}
	\caption{The non-Pappus matroid.}
	\label{fig:nonpappus}
\end{figure}

Let $M_1= P\backslash x$ and $M_2 = P/x$. 
The non realizability of the non-Pappus matroid results in the non-realizability of the flag $\Bergman(M_1) \subseteq  \Bergman(M_2)$. This is Example 7 from \cite{BGW03}, but we have added a labeling i.e., a pairing of the points so that $M_1$ and $M_2$ are symplectically realizable. 
We checked computationally that $M_2 \in \T\grass_0(3,8)$ while $M_1\in \T\grass_0(2,8)$ by \Cref{thm:rank2}. 
\end{examplemat}

\subsection{Symplectic Matroid not in the Symplectic Dressian}
\begin{examplemat}

Consider the symplectic matroid depicted in \Cref{fig:nonrepsymp} (the definition of symplectic matroid can be found below in \Cref{sec:future}). 
Explicitly, we have the symplectic matroid with non bases $\{12,1\bar{2},\bar{1}3,\bar{1}\bar{3}\}$. This example appeared in \cite[\S 3.4.3]{BGW03} as an example of a non-realizable symplectic matroid.

\begin{figure}[h]
	\centering
		\includegraphics[width=0.38\textwidth]{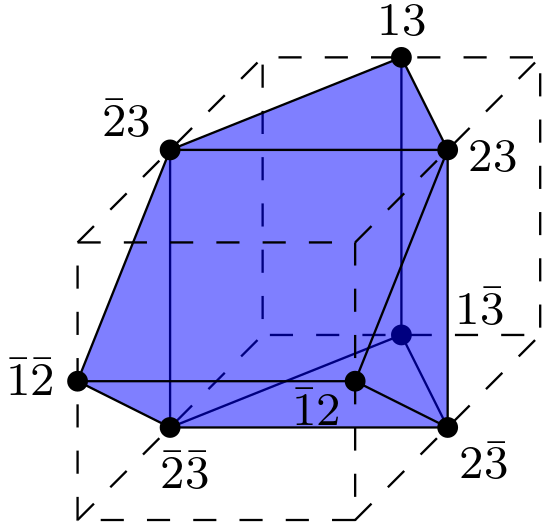}
	\label{fig:nonrepsymp}
	\caption{The polytope of a non-realizable symplectic matroid that does not satisfy the symplectic relations.}
\end{figure}

If we were to complete this symplectic matroid to a usual matroid (i.e., determining its bases for non-admissible sets), we would necessarily have that $13\bar{3}$ and $\bar{1}2\bar{2}$ are the two parallel classes. But then the symplectic relation $x_{1\bar{1}}\oplus x_{2\bar{2}}\oplus x_{3\bar{3}}$ achieves its minimum once, so such matroid is not in the symplectic Dressian.

\label{exa:zoo-nine}

\end{examplemat}

\section{Conormal Bundles}
\label{sec:conormal}

Recall that the dual of a valuated matroid $\mu$ on $[n]$ of rank $k$ is a valuated matroid $\mu^*$ on $[n]$ of rank $n-k$ where $\mu^*_B = \mu_{[n]\backslash B}$. 

In \cite{ADH20}, Ardila, Denham and Huh studied the \emph{conormal fan} of a matroid, $\Sigma_{M,M^*}$, whose support equals $L_{M\oplus M^*}$. The latter is a tropical linear space of dimension $n$ on $[2n]$. The conormal fan serves as a tropical analogue of conic Lagrangian subvarieties. Therefore, a natural question is: where does this kind of tropical linear space fit in our picture (that is, in \Cref{fig:zoomap})? 

We answer this question for a more general class of tropical linear spaces, namely for the following natural generalization to valuated matroids:

\begin{definition}
Let $\mu$ be a valuated matroid. The \emph{conormal bundle} of $\mu$ is the tropical linear space $L_{\mu \oplus \mu^*}$.
\end{definition}

We remark that this is a generalization of conormal fans only in the same way as tropical linear spaces generalize Bergman fans: for unvaluated matroids the conormal bundle coincides with the support of the conormal fan, but the conormal fan may have a finer subdivision. As the name suggests, for valuated matroids the conormal bundle may not be a fan. 

\begin{theorem}
\label{prop:conormal}
Let $\mu$ be a valuated matroid. Then:
\begin{enumerate}
    \item if $\mu$ is realizable over a field $\KK$ of characteristic $p$, then $\mu\oplus \mu^*\in \T\grass_p(n,2n)$.
    \item for any $\mu$, $\mu\oplus\mu^*\in \Sp\Dr(n,2n)$, i.e., $\mu\oplus\mu^*$ satisfies the tropical symplectic relations.
    \item the conormal bundle $L_{\mu\oplus\mu^*}$ is isotropic.
    \item if $\mu$ and $\mu^*$ are transversal, $\mu\oplus\mu^*$ has a symmetric presentation if and only if $\mu$ has a single basis.
\end{enumerate}
\end{theorem}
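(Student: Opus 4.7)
The proof splits naturally into four independent parts, each of which rephrases the condition on $\mu \oplus \mu^*$ as a condition on $\mu$ alone. Part (1) is geometric; parts (2) and (3) are combinatorial unpackings; part (4) is a structural analysis of the tropical Stiefel map, where the main subtlety lies.

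For part (1), let $L \subseteq \KK^n$ be a $k$-dimensional subspace realizing $\mu$, and let $L^\perp \subseteq \KK^n$ be its orthogonal complement with respect to the standard dot product. By the classical identity $q_J = \pm p_{[n]\setminus J}$ relating the Pl\"ucker coordinates of $L$ and $L^\perp$, the subspace $L^\perp$ realizes $\mu^*$, so $L \times L^\perp \subseteq \KK^{2n}$ realizes $\mu \oplus \mu^*$. Using \Cref{eq:sympform}, $\omega((u,v),(u',v')) = \langle u, v'\rangle - \langle v, u'\rangle$, which vanishes for $u, u' \in L$ and $v, v' \in L^\perp$. Hence $L \times L^\perp$ is Lagrangian, proving $\mu \oplus \mu^* \in \T\grass_p(n, 2n)$.

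For part (2), I would expand the tropical symplectic relation on a set $S \in \binom{[2n]}{n-2}$. Setting $A := S \cap [n]$ and $B' := \{i : \bar i \in S\} \subseteq [n]$, the only nontrivial case is $|A| = k-1$; then, using $\mu^*_J = \mu_{[n]\setminus J}$, the summands become $\mu_{A \cup i} + \mu_{[n]\setminus (B' \cup i)}$, which is precisely a three-term Pl\"ucker relation for $\mu$ with $S' = A$ and $T = [n] \setminus B'$, and holds since $\mu \in \Dr(k, n)$. Part (3) is similarly a direct check: by \Cref{prop:isorincon}, $L_{\mu \oplus \mu^*}$ is isotropic iff $(\mu \oplus \mu^*)_{\overline{J}} = (\mu \oplus \mu^*)_{[2n] \setminus J}$ for every $J \in \binom{[2n]}{n}$; writing $J = A \cup \overline{B'}$, both sides simplify to $\mu_{B'} + \mu_{[n] \setminus A}$ when $|A| = n-k$, and both are $\infty$ otherwise.

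For part (4), the remark following \Cref{eq:stiefel_sum} (from \cite[Theorem 6.6]{FO19}) tells us that every presentation of $\mu \oplus \mu^*$ has the block form $\left(\begin{smallmatrix} A_1 & \infty \\ \infty & A_2 \end{smallmatrix}\right)$, with $A_1 \in \TT^{k \times n}$ and $A_2 \in \TT^{(n-k) \times n}$ presenting $\mu$ and $\mu^*$. Reading this as $(A \mid B)$, a block computation gives $A \odot B^T = \left(\begin{smallmatrix} \infty & A_1 \odot A_2^T \\ \infty & \infty \end{smallmatrix}\right)$, so symmetry forces $A_1 \odot A_2^T$ to be the all-$\infty$ matrix; equivalently, for every $l$, either column $l$ of $A_1$ or column $l$ of $A_2$ is entirely $\infty$. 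The main obstacle is interpreting this in matroid-theoretic terms. Since every row of a presentation lies in the associated tropical linear space, and since the tropical linear space of a matroid with a loop at $l$ is forced into $\{x_l = \infty\}$, column $l$ of $A_1$ is entirely $\infty$ iff $l$ is a loop of $\mu$; symmetrically, column $l$ of $A_2$ is entirely $\infty$ iff $l$ is a loop of $\mu^*$, i.e., a coloop of $\mu$. Thus a symmetric presentation exists iff every $l \in [n]$ is a loop or coloop of $\mu$, which happens exactly when $\mu$ has a single basis.
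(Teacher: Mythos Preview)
Your proof is correct and follows essentially the same approach as the paper's: parts (1)--(3) are argued identically, and in part (4) your explicit block computation $A\odot B^T=\left(\begin{smallmatrix}\infty & A_1\odot A_2^T\\ \infty & \infty\end{smallmatrix}\right)$ is a slightly cleaner packaging of the paper's ad hoc entry-by-entry argument. One minor slip: in part (2) the resulting relation $\bigoplus_i \mu_{Ai}\odot\mu_{T\setminus i}$ is a general tropical Pl\"ucker relation (with $|T\setminus A|$ terms), not necessarily a three-term one---but this does not affect the argument, since $\mu\in\Dr(k,n)$ satisfies all of them.
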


\begin{proof}
For the first statement, if $\mu=\trop(V)$ for a linear subspace $V\subseteq \KK^n$, then the orthogonal complement $V^\perp$ satisfies that $\trop(V^\perp)= \mu^*$ and $V\times V^\perp\subseteq \KK^{2n}$ is isotropic with respect to the canonical symplectic form given by \Cref{eq:sympform}. So $\mu\oplus\mu^*= \trop(V\times V^\perp)\in \T\grass_p(n,2n)$. 

Next, let $\mu$ be any valuated matroid (not necessarily realizable). Notice that the tropical symplectic relations for $\mu\oplus \mu^*$ are exactly the same as the tropical Pl\"ucker relations for $\mu$:

\[
    \bigoplus_{i}(\mu\oplus\mu^*)_{Si\bar{i}}
    = \bigoplus_{i}\mu_{(S\cap [n])\cup i}\odot \mu^*_{S\backslash [n] \cup \bar{i}} = \bigoplus_{i}\mu_{S'i}\odot \mu_{T\backslash i}
\]
where $S'= S\cap [n]$ and $T = [n]\backslash \bar{S}$. 

For the third statement, we use \Cref{prop:isorincon}. Let $J= B_1\sqcup B_2\in \binom{2n}{n}$ where $B_1 = J \cap [n]$. Then
\[
    (\mu\oplus\mu^*)_{[2n] \backslash J}
    = \mu_{[n]\backslash B_1}\odot \mu^*_{[n]\backslash B_2}
    = \mu_{B_2} \odot \mu^*_{B_1}
    = (\mu\oplus\mu^*)_{\bar{J}}.
\]

Finally, to relate conormal bundles with the Stiefel image, we need to work under the assumption that both $\mu$ and $\mu^*$ are transversal (transversality is closed under direct sums but not under duality). If such is the case, then by part (2) we have that $\mu\oplus\mu^*$ has row-orthogonal presentation. 

Now, if $\mu$ has a single basis, it means it is the direct sum of loops and coloops (an element $i$ is a loop of $\mu$ if it is not in any basis and it is a coloop if it is a loop of $\mu^*$). Therefore, $\mu^*$ and $\mu\oplus\mu^*$ are also of this form. Then, if column $i$ of a presentation $(A|B)$ of $\mu\oplus\mu^*$ has a finite entry, the column $\bar{i}$ must have all entries equal to $\infty$.
From this it follows that $A\odot B^T$ has all infinity entries for a presentation $(A|B)$ and in particular it is symmetric.

If $\mu$ is not of this form, then there is an element $x\in [n]$ which is not a loop in $\mu$ or in $\mu^*$. Therefore, there are rows $i$ and $j$ in $(A|B)$ with a finite entry at $x$ and $\bar{x}$ respectively. This implies that the $(i,j)$-entry of $A\odot B^T$ is finite. However, the $(j,i)$ entry of $A\odot B^T$ is given by the tropical dot product of two all $\infty$ vectors, so it is equal to $\infty$. Therefore $A\odot B^T$ is not symmetric.
\end{proof}

\section{Future Bridges}
\label{sec:future}
In this section, we discuss several notions including the tropical symplectic flag variety, symplectic flag Dressian, flag Stiefel image, degenerations of symplectic flag varieties and total positivity and pose several questions about them.

\subsection{Symplectic Matroids} \label{subsec:sympmat} We follow Borovik et al \cite{BGW98,BGW03} and we refer the reader there for details on symplectic matroids.\\

We begin by recalling that we call a subset $\Jm\subset[2n]$ \emph{admissible} if for every $i\in \Jm$, $\bar{i}\notin \Jm$.
Let $e_{\bar{i}} := -e_i$ and for an admissible set $\Jm\subseteq [2n]$ consider $e_{\Jm} := \sum\limits_{i\in \Jm} e_i \in \RR^{[n]}$. 

\begin{definition}
\label{def:sympmat}
A \emph{symplectic matroid} is a collection $\BB\subseteq \binom{[2n]}{k}$ of admissible sets such that every edge of the polytope $\conv\{e_{\Jm} \mid \Jm\in \BB\}$ is either a translation of $e_i-e_j$ for some $i,j$ or of $2e_i$ for some $i$. 
\end{definition}

The main motivation behind this definition is the following:

\begin{theorem}[\cite{BGW98}]
\label{thm:repsympmat}
Let $L\subseteq \KK^{[2n]}$ be an isotropic linear space. Then the collection of admissible bases of $L$ is a symplectic matroid.
\end{theorem}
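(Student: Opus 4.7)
My plan is to identify the polytope $\conv\{e_{\Jm} : \Jm \text{ admissible basis of } L\}$ as the moment polytope of a torus orbit closure inside the symplectic Grassmannian, and then invoke root-system theory to read off the permitted edge directions. Let $G = \Sp(2n,\KK)$, with maximal torus $T$ of diagonal matrices $t = \mathrm{diag}(t_1,\ldots,t_n,t_1^{-1},\ldots,t_n^{-1})$ preserving $\omega$. Under the induced action on $\grass(k,2n) \subseteq \Gr(k,2n)$, each Pl\"ucker coordinate $\X_{\Jm}$ is a $T$-eigenvector whose weight, with the convention $e_{\bar i}=-e_i$, is precisely the vector $e_{\Jm} \in \mathbb{Z}^n$ appearing in \Cref{def:sympmat}. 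The $T$-fixed points of $\grass(k,2n)$ are then exactly the coordinate subspaces $E_{\Jm} := \Span\{e_i : i \in \Jm\}$ for admissible $k$-subsets $\Jm \subseteq [2n]$, since for non-admissible $\Jm$ the corresponding $E_{\Jm}$ contains some pair $e_i, e_{\bar i}$ and is not isotropic.

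The standard toric closure relation yields $E_{\Jm} \in \overline{T\cdot L}$ if and only if $\X_{\Jm}(L)\neq 0$, that is, if and only if $\Jm$ is an admissible basis of $L$. Consequently, by the Atiyah--Guillemin--Sternberg convexity theorem (or its algebro-geometric analog via Bia{\l}ynicki--Birula decomposition), the moment polytope of $\overline{T\cdot L}$ is exactly $P_L := \conv\{e_{\Jm} : \Jm \text{ admissible basis of } L\}$, and every edge of this polytope corresponds to a one-dimensional $T$-orbit joining two of its $T$-fixed points.

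To finish, I would analyze the tangent weights at each fixed point $E_{\Jm}$. The tangent space $T_{E_{\Jm}} \Gr(k,2n)$ decomposes into one-dimensional $T$-weight spaces whose weights are $e_\ell - e_m$ for $m\in \Jm$, $\ell \notin \Jm$, and edges of $P_L$ emanating from $e_{\Jm}$ are parallel to exactly those weights for which the exchanged set $(\Jm\setminus m)\cup\ell$ is again admissible (so the orbit stays in $\grass(k,2n)$). Two cases arise: if $\ell \neq \bar m$, the direction $e_\ell - e_m$ lies in the set $\{e_i - e_j : i,j \in [2n]\}$ and is a short root $\pm e_i \pm e_j$ of type $C_n$; if $\ell = \bar m$, then $e_{\bar m} - e_m = -2 e_m$ is a long root $\pm 2 e_i$. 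Both alternatives are exactly the translates allowed as edge directions in \Cref{def:sympmat}, and so the collection of admissible bases of $L$ is a symplectic matroid.

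The principal technical obstacle is showing that every polytope edge actually arises from a single one-dimensional torus orbit, rather than from an abstract convex-hull coincidence, and that the edge direction equals one of the tangent weights above instead of a nontrivial combination. Since $\overline{T \cdot L}$ may be singular, this requires either a direct analysis in local Bia{\l}ynicki--Birula charts around the fixed points or an appeal to the general theory of spherical varieties. A cleaner alternative is to bypass the edge check and use the Gelfand--Serganova theorem \cite{BGW03}, which recasts the symplectic-matroid property as a Bruhat-maximality statement for the Weyl group of $\Sp(2n,\KK)$; in that formulation one must simply verify that the $w$-twist of $\X_{\Jm}$ attains its maximum on a unique admissible basis of $L$, which follows from the existence of the corresponding Bruhat cell in $\grass(k,2n)$.
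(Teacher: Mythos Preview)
The paper does not prove \Cref{thm:repsympmat}; it is quoted from \cite{BGW98} as background and no argument is given. Your moment-map sketch is essentially the Gelfand--Serganova approach that underlies the proof in \cite{BGW98,BGW03}, so you have independently reproduced the strategy of the cited source rather than diverged from anything in the present paper.

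One inaccuracy is worth correcting. For the type~${\tt C}$ torus $T$, the weight spaces of $T_{E_{\Jm}}\Gr(k,2n)$ are \emph{not} all one-dimensional: if, say, $\bar m_1,\bar m_2\in\Jm$ and $m_1,m_2\notin\Jm$, then the two tangent directions $e_{\bar m_1}\mapsto e_{m_2}$ and $e_{\bar m_2}\mapsto e_{m_1}$ both carry the weight $e_{m_1}+e_{m_2}$. It is only after passing to the subspace $T_{E_{\Jm}}\grass(k,2n)$ that the infinitesimal isotropy constraint cuts each root weight space down to dimension one. Relatedly, edges of $P_L$ need not arise from a single-element exchange: the reflection $s_{e_i+e_j}$ produces $\Jm\mapsto(\Jm\setminus\{i,j\})\cup\{\bar i,\bar j\}$, giving an edge parallel to $e_i+e_j=e_i-e_{\bar j}$, which is still allowed by \Cref{def:sympmat} but is missed by your ``$(\Jm\setminus m)\cup\ell$ admissible'' description. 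None of this breaks the conclusion, since the weights of $T_{E_{\Jm}}\grass(k,2n)$ are in any case roots of $C_n$; but it does mean the clean route is exactly the one you name at the end, namely the maximality formulation of the Gelfand--Serganova theorem, which is how \cite{BGW03} actually packages the argument.
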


A symplectic matroid constructed as above is called \emph{representable}. From \Cref{thm:repsympmat}, it follows that any realizable symplectic matroid can be completed to a matroid which is in the tropical symplectic Grassmannian (as opposed to \Cref{exa:zoo-nine}). \\

One possible direction would be to find axioms for which one can enhance a symplectic matroid with a valuation on the tropical numbers, which includes every vector in the tropical symplectic Grassmannian $\T\grass(k,2n)$ and such that symplectic matroids are exactly the valuations that take values in $\{0,\infty\}$. However, even the $\{0,\infty\}$ symplectic matroids do not always satisfy the tropical symplectic relations (see \Cref{sec:matroidzoo} above), so this would not be contained in the symplectic Dressian $\Sp\Dr(k,2n)$.

In general, the connection of symplectic matroids with the tropical symplectic Grassmannian remains largely not understood. One reason for this is that the symplectic relations (\Cref{eq:tropsymp}) are not in terms of variables corresponding to admissible sets. However, there are equations in the ideal generated by them which are purely in terms of variables corresponding to admissible sets (at least in characteristic 2, see \Cref{exa:zoo-one}). Finding more of these relations and studying their combinatorics might lead to a better understanding of this connection.

\subsection{Characterization in terms of Matroid Subdivisions}
It would be desirable to obtain a characterization of tropical symplectic Pl\"ucker vectors in terms of polyhedral subdivisions, similar to Speyer's theorem for type {\tt A} \cite[Proposition 2.2]{Spe08}, generalized by Rinc\'on for any matroid polytope in  type {\tt A} as well as type {\tt D} \cite[Theorem 5.14]{Fel12} and in \cite[Theorem A]{BEZ20} to tropical flags. 

However, such a characterization is, in principle, not possible,
since satisfying the tropical symplectic relations is not closed under translations. This implies that being a tropical symplectic Pl\"ucker vector can not be determined from the matroid subdivision it induces.
But one could still ask which matroid subdivisions are induced by vectors from $\Sp\Dr(k,2n)$.

\subsection{The Tropical Symplectic Flag Variety and Symplectic Flag Dressian} The complete symplectic flag variety which will be denoted by $\complete_{2n}$, is the set of all full flags $\{\U_1\subset\cdots\subset \U_n, \,\, \dim \,\, \U_k =k\}$, with the extra condition that each $\U_k$ is isotropic with respect to the symplectic form $\omega$. We consider the Pl\"ucker embedding of $\complete_{2n}$ into the product of projective spaces $\mathbb{P}\Big(\bigwedge^k \mathbb{C}^{2n}\Big)$, for $1\leq k\leq n$. It is an irreducible projective variety of dimension $n^2$. With respect to this embedding, its defining ideal which we denote by $S_{2n}$, is seen to be generated by the corresponding Pl\"ucker and symplectic relations following the work of De Concini \cite{Dec79}. See also \cite{MAK20} and \cite{B20} for a further description and some examples of this ideal. Let $G_{2n}$ denote the set of the generators of the ideal $S_{2n}$.\\

Let $\textsc{TF}_{n}$ denote the usual tropical flag variety, i.e., the tropicalization of the (type {\tt A}$_{n-1}$) complete flag variety with respect to the Pl\"ucker embedding. Let $N=\sum_{i=1}^n \binom{2n}{i}$ denote the number of Pl\"ucker coordinates on $\complete_{2n}$ with respect to the Pl\"ucker embedding as discussed above. We introduce the symplectic counterpart of $\textsc{TF}_{n}$; the \emph{tropical symplectic flag variety}, which will be denoted by $\T\complete_{2n}$.

\begin{definition}
The \emph{tropical symplectic flag variety} $\T\complete_{2n}$, is the tropicalization of the symplectic flag variety $\complete_{2n}$. That is
\[\T\complete_{2n}=
\{\mu \in \TP^{N-1} \mid \mu \in \trop(V(f)) \quad \forall \,\, f \in S_{2n}\}.
\]
%where $\TP^{N-n} = \left(\TT^{N} \setminus (\infty,\dots,\infty)\right)/ \RR^n$.
\end{definition}
% The following corollary is also a consequence of the Structure Theorem \cite[Theorem 3.3.5]{MS15}:
 
%\begin{corollary}
%The tropical symplectic flag variety $\T\complete_{2n}$ is a pure polyhedral fan in $\TP^{N-n}$ of dimension $n^2$.
%\end{corollary}

\begin{example}[n=2]
The tropical symplectic flag variety $\T\complete_4$ is a pure 4-dimensional fan in $\mathbb{R}^8$ with a 2-dimensional lineality space. It has 10 rays and 15 maximal cones.
\end{example}

In the following, we define the \emph{symplectic flag Dressian} which we denote by $\complete\Dr_{2n}$. It is the analogue of the usual flag Dressian; introduced and studied by Brandt, Eur and Zhang \cite{BEZ20}.
\begin{definition}
The \emph{symplectic flag Dressian} $\complete\Dr_{2n}$ is given by:
\[\complete\Dr_{2n}=
\{\mu \in \TP^{N-1} \mid \mu \in \trop(V(f)) \quad \forall \,\, f \in G_{2n}\}.
\]
\end{definition}
As it would be expected, the tropical symplectic flag variety $\T\complete_{2n}$ is not equal to $\complete\Dr_{2n}$ in general. However, the two fans coincide computationally for $n=2$. It is important to note that the issue of representability of symplectic flags is subtle, in that there are flags with symplectically representable components but the flags themselves are not symplectically representable. Such a scenario is illustrated in Example \ref{exa:nonrepresentableflag} of the matroid zoo.\\

As in the classical setting, we can also obtain a flag of tropical linear spaces out of an $n\times n$ matrix, by considering all minors (not necessarily maximal) whose rows are indexed by $[k]$ for some $k\le n$.

\begin{definition}\label{def:flagstiefel}
The \emph{tropical flag Stiefel map} for type ${\tt A}$,
\[
\pi_{\mathcal{F}}^{\tt A}: \TT^{n\times n} \dashrightarrow \textsc{TF}_{n},
\] 
sends a matrix to the vector where for every non-empty proper subset $S\subset [n]$,  $\pi_{\mathcal{F}}(A)_S$ is the tropical determinant of the submatrix of $A$ with columns indexed by $S$ and the first $|S|$ rows. 

Similarly, we have a \emph{tropical flag Stiefel map} for type ${\tt C}$,
\[
\pi_{\mathcal{F}}^{\tt C}: \TT^{n\times 2n} \dashrightarrow \T\complete_{2n},
\] 
where we consider the minors indexed by non-empty subsets $S\subset [2n]$ of size at most $n$.

The image under the above maps we call \emph{flag Stiefel image}.
\end{definition}

\subsection{Degenerations of Symplectic Grassmann/Flag Varieties and Tropical Geometry} 
A cone of a tropical variety is said to be prime if the corresponding initial ideal is a prime ideal.
Prime maximal cones of tropicalizations of algebraic varieties give rise to a nice class of toric degenerations, namely, those that are irreducible. According to \cite{SS04}, all maximal cones of $\T\Gr(2,n)$ are prime, but not all those of $\T\Gr(3,6)$ are. On the other hand, Bossinger, Lambogila, Mincheva, and Mohammadi in \cite{BLMM17}, computed toric degenerations of the type {\tt A} flag variety corresponding to both prime and non-prime maximal cones in the tropicalization. So, it is natural to ask which of the maximal cones of the tropical symplectic Grassmann or flag variety are prime cones. 

Now, an explicit computation of tropicalizations of flag varieties is very cumbersome, in that, it is only possible for small $n$ (see again \cite{BLMM17}). However, a full facet description of a maximal prime cone with nice features in the type {\tt A} tropical flag variety has been given in \cite{FFFM19}, and connections made to various degenerations of flag varieties. For example, some of its facets correspond to linear degenerations \cite{GFFFR19}.

We point out that such a description can be explained through the tropical flag Stiefel map discussed above. Consider the cone $\mathcal{A} \subseteq \TT^{n\times n}$ of upper triangular matrices $A$ such that:

\begin{itemize}
    \item $A_{i,i} = 0$,
    \item $A_{i,j} = \infty,\qquad\qquad\qquad\qquad\qquad\quad$ for $j<i$,
    \item $A_{i,j} + A_{i+1,j+1} \ge A_{i+1,j}+A_{i,j+1},\quad\,$ for $1\leq i<j\leq n-2$.
\end{itemize}

The original description of the cone in \cite{FFFM19} was stated by setting the tropical Pl\"ucker coordinates as 
\[
\mu_B = \sum_{i=1}^l A_{p_i,q_i}
\]
where $p_1>\dots> p_l$ are the elements of $B \setminus [k]$, $q_1<\dots< q_l$ are the elements of $[k]\setminus B$ and $k=|B|$. However, we notice that this corresponds precisely to the minimal term in the tropical determinant of the submatrix with rows $[k]$ and columns $B$ got by adding the 0-terms $A_{i,i}$ for $i\in [k]\cap B$. To see that, notice that this is the lexicographic maximal permutation using finite coordinates and the third condition ensures that this is the minimal term. So we conclude that  the maximal prime cone described in \cite{FFFM19} is actually $\pi_{\mathcal{F}}^{\tt A}(\mathcal{A})$, where $\pi_{\mathcal{F}}^{\tt A}$ is the type {\tt A} tropical flag Stiefel map (\Cref{def:flagstiefel}). We see therefore that the valuated matroids within this cone are transversal.

A symplectic counterpart of this cone is under construction in \cite{B21} and we conjecture that it also lives in the tropical flag Stiefel image for type {\tt C}.

\subsection{Total Positivity}
A major object of study has been the non-negative Grassmannian $\Gr_{\ge0}(k,n)$, that is, the set of all Pl\"ucker vectors in $\Gr_\RR(k,n)$ with non-negative entries. Extensive research has been conducted on its positroid decomposition, specially motivated by its connections to quantum physics, such as computing Feynman integrals in the supersymmetric-Yang-Mills model (see, for example, \cite{ABT} for an overview). The cells of the positroid decomposition are indexed by a variety of combinatorial gadgets and can be parametrized using networks through the boundary measurement map \cite{POS06}. Adaptations of the combinatorics of the non-negative Grassmannian have been constructed both for type {\tt C}, more precisely, for the non-negative Lagrangian Grassmannian $\Sp\Gr_{\ge 0}(n,2n) := \Gr_{\ge 0}(n,2n)\cap \grass(n,2n)$ \cite{KAR18}. On the other hand, the study of the totally positive tropical Grassmannian as defined in \cite{SW05} has gained a lot of momentum recently (see for example \cite{ALS20,LPW20,SW20}) where the combinatorics of positroids play again a protagonist role. Therefore we believe it is likely that the combinatorial objects used in \cite{KAR18}, such as symmetric plabic graphs, can be used in a tropical setting to provide information about the non-negative part of the tropical symplectic Grassmannian.

%\printbibliography

\end{document}